\newtheorem{theorem}{Theorem}[section]
\newtheorem{proposition}[theorem]{Proposition}
\newtheorem{corollary}[theorem]{Corollary}
\newtheorem{lemma}[theorem]{Lemma}
\newtheorem{definition}[theorem]{Definition}
\numberwithin{equation}{section}
\theoremstyle{definition}
\newtheorem{remark}[theorem]{Remark}
\newtheorem{example}[theorem]{Example}
\newtheorem{question}[theorem]{Question}
\DeclareSymbolFont{bbold}{U}{bbold}{m}{n}
\DeclareSymbolFontAlphabet{\mathbbold}{bbold}
\newcommand{\zs}
\newcommand{\term}[1]{{\textit{\textbf{#1}}}}   % To introduce a term
\definecolor{dred}{RGB}{95,2,31}
\begin{document}

\title[Unbounded convergences]
{Completeness of Unbounded Convergences}

\author{M.A. Taylor}
\address{Department of Mathematical and Statistical Sciences,
         University of Alberta, Edmonton, AB, T6G\,2G1, Canada.}
\email{mataylor@ualberta.ca}

\keywords{uo-convergence, unbounded topology, minimal topology, completeness, boundedly uo-complete, monotonically complete, Levi.}
\subjclass[2010]{46A40, 46A16, 46B42}
% 46B42 Banach lattices
% 46A40 Ordered topological linear spaces, vector lattices

\thanks{The author acknowledges support from NSERC and the University of Alberta.}

\date{\today}

\begin{abstract}
As a generalization of almost everywhere convergence to vector lattices, unbounded order convergence has garnered much attention. The concept of boundedly $uo$-complete Banach lattices was introduced by N.~Gao and F.~Xanthos, and has been studied in recent papers by D.~Leung, V.G.~Troitsky, and the aforementioned authors. We will prove that a Banach lattice is boundedly $uo$-complete iff it is monotonically complete. Afterwards, we study completeness-type properties of minimal topologies; minimal topologies are exactly the Hausdorff locally solid topologies in which $uo$-convergence implies topological convergence.
\end{abstract}

\maketitle
\section{Introduction}
In the first half of the paper, we study when norm bounded $uo$-Cauchy nets in a Banach lattice are $uo$-convergent. The section starts with a counterexample to a question posed in \cite{LC}, and culminates in a proof that a Banach lattice is (sequentially) boundedly $uo$-complete iff it is (sequentially) monotonically complete. This gives the final solution to a problem that has been investigated in \cite{G}, \cite{GX}, \cite{GTX}, and \cite{GLX}.

The latter half of this paper focuses on the  ``extremal" topologies of a vector lattice $X$. For motivation, recall that corresponding to a dual pair $\langle E,E^*\rangle$ is a family of topologies on $E$  ``compatible" with duality. The two most important elements of this family are the weak and Mackey topologies, which are defined by their extremal nature. Analogously, given a vector lattice $X$, it is often possible to equip $X$ with many topologies compatible (in the sense of being locally solid and Hausdorff) with the lattice structure. It is easy to see that whenever $X$ admits some Hausdorff locally solid topology, the  collection of all Riesz pseudonorms on $X$ generates a \emph{finest} Hausdorff locally solid topology on $X$. This ``greatest" topology appears in many applications. Indeed, analogous to the theory of compatible locally convex topologies on a Banach space - where the norm topology is the Mackey topology - the norm topology on a Banach lattice $X$ is the finest topology on $X$ compatible with the lattice structure. This is \cite[Theorem 5.20]{AB03}. 

On the opposite end of the spectrum, a Hausdorff locally solid topology on a vector lattice $X$ is said to be \term{minimal} if there is no coarser Hausdorff locally solid topology on $X$; it is \term{least} if it is coarser than every Hausdorff locally solid topology on $X$. Least topologies were introduced in \cite{AB80} and studied in \cite{AB03}; minimal topologies were studied in \cite{LAB}, \cite{Conradie05}, \cite{me}, and \cite{KT}. An important example of a least topology is the unbounded norm topology on an order continuous Banach lattice. The unbounded absolute weak$^*$-topology on $L_{\infty}[0,1]$ is a noteworthy example of a minimal topology that is not least. In the next subsection, we briefly recall some facts about minimal and unbounded topologies; for a detailed exposition the reader is referred to \cite{me} and \cite{KT}.

\subsection{Notation}
Throughout this paper, all vector lattices are assumed Archimedean. For a net $(x_{\alpha})$ in a vector lattice $X$, we write $x_{\alpha}\xrightarrow{o}x$ if $(x_{\alpha})$ \textbf{\textit{converges to $x$ in order}}; that is, there is a net $(y_{\beta})$, possibly over a different index set, such that $y_{\beta}\downarrow 0$ and for every $\beta$ there exists $\alpha_0$ such that $|x_{\alpha}-x|\leq y_{\beta}$ whenever $\alpha\geq \alpha_0$. We write $x_{\alpha}\xrightarrow{uo}x$ and say that $(x_{\alpha})$ \textbf{\textit{uo-converges}} to $x\in X$ if $|x_{\alpha}-x|\wedge u\xrightarrow{o}0$ for every $u \in X_+$. For facts on $uo$-convergence, the reader is referred to \cite{GTX}. In particular, \cite[Theorem 3.2]{GTX} will be used freely. Recall that a Banach lattice $X$ is \term{(sequentially) boundedly uo-complete} if norm bounded $uo$-Cauchy nets (respectively, sequences) in $X$ are $uo$-convergent in $X$.

Given a locally solid topology $\tau$ on a vector lattice $X$, one can associate a topology, $u\tau$, in the following way. If $\{U_i\}_{i\in I}$ is a base at zero for $\tau$ consisting of solid sets, for each $i\in I$ and $u\in X_+$ define
$$U_{i,u}:=\{x\in X:\; |x|\wedge u\in U_i\}.$$
As was proven in \cite[Theorem 2.3]{me}, the collection $\mathcal N_0=\{U_{i,u}:\; i\in I, u\in X_+\}$ is a base of neighbourhoods at zero for a new locally solid topology, denoted by $u\tau$, and referred to as the \term{unbounded $\tau$-topology}. Noting that the map $\tau\mapsto u\tau$ from the set of locally solid topologies on $X$ to itself is idempotent, a locally solid topology $\tau$ is called \term{unbounded} if there is a locally solid topology $\sigma$ with $\tau=u\sigma$ or, equivalently, if $\tau=u\tau.$ The following connection between minimal topologies, unbounded topologies, and $uo$-convergence was proven in \cite[Theorem 6.4]{me}. Recall that a locally solid topology $\tau$ is \textbf{\textit{Lebesgue}} if order null nets are $\tau$-null.
\begin{theorem}\label{a.e. implies measure}
Let $\tau$ be a Hausdorff locally solid topology on a vector lattice $X$. TFAE:
\begin{enumerate}
\item\label{a.e. implies measure-1} $uo$-null nets are $\tau$-null;
\item $\tau$ is Lebesgue and unbounded;
\item $\tau$ is minimal.
\end{enumerate}
In particular, a vector lattice can admit at most one minimal topology.
\end{theorem}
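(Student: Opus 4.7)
The plan is to establish the cycle (ii) $\Rightarrow$ (i) $\Rightarrow$ (iii) $\Rightarrow$ (ii), then derive uniqueness from (iii) alone.

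For (ii) $\Rightarrow$ (i): take $x_\alpha \xrightarrow{uo} 0$. By definition, $|x_\alpha| \wedge u \xrightarrow{o} 0$ for every $u \in X_+$. The Lebesgue property of $\tau$ upgrades this to $|x_\alpha| \wedge u \xrightarrow{\tau} 0$ for every $u \in X_+$, which is exactly $x_\alpha \xrightarrow{u\tau} 0$ by the construction of the unbounded modification described just before the theorem. Unboundedness ($\tau = u\tau$) then yields $x_\alpha \xrightarrow{\tau} 0$.

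For (iii) $\Rightarrow$ (ii): unboundedness is the easier half. One first checks that $u\tau$ is automatically Hausdorff, since for $x \neq 0$ Hausdorffness of $\tau$ gives some solid $U_i$ with $|x| \notin U_i$, and then $U_{i,|x|}$ misses $x$. By \cite[Theorem 2.3]{me}, $u\tau$ is locally solid and coarser than $\tau$, so minimality forces $u\tau = \tau$. For the Lebesgue property, one invokes a canonical Lebesgue coarsening of $\tau$ (generated by the order-continuous Riesz pseudonorms dominated by $\tau$-continuous ones) whose Hausdorffness under minimality forces, again by minimality, its equality with $\tau$; this is the material of \cite{AB03} and \cite{me}.

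For (i) $\Rightarrow$ (iii): assuming (i), $\tau$ is immediately Lebesgue, since $o$-convergence entails $uo$-convergence. For minimality, let $\sigma \le \tau$ be any Hausdorff locally solid topology; we must show $\sigma = \tau$. Since $uo$-null nets are $\tau$-null and hence $\sigma$-null, $\sigma$ itself satisfies the analogue of (i). The remaining task is the reverse inequality: given $x_\alpha \xrightarrow{\sigma} 0$, we need $x_\alpha \xrightarrow{\tau} 0$. The plan is a subnet argument: using \cite[Theorem 3.2]{GTX} together with the Lebesgue property of $\sigma$, extract from an arbitrary subnet of $(x_\alpha)$ a further subnet along which $|x_\gamma| \wedge u \xrightarrow{o} 0$ simultaneously for all $u \in X_+$, apply (i) to deduce $\tau$-convergence to $0$ of this subnet, and conclude by the standard ``every subnet has a $\tau$-null further subnet $\Rightarrow$ $\tau$-null'' principle.

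Finally, uniqueness of the minimal topology is formal: if $\tau, \tau'$ are both minimal, then $\tau \wedge \tau'$ (the infimum in the lattice of locally solid topologies) is Hausdorff, since Hausdorffness is inherited from either factor, locally solid, and coarser than both; minimality forces $\tau = \tau \wedge \tau' = \tau'$. I expect the main obstacle to be the subnet extraction inside (i) $\Rightarrow$ (iii), where one must bridge the purely topological datum ``$x_\alpha \xrightarrow{\sigma} 0$'' back to the order-theoretic datum of simultaneous $o$-convergence to $0$ of all truncations $|x_\gamma| \wedge u$ in a fully abstract vector lattice; this is where the real content of the theorem lies and where the interplay between Lebesgue topologies and \cite[Theorem 3.2]{GTX} is essential.
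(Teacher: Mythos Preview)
The paper does not prove this theorem; it is quoted from \cite[Theorem 6.4]{me} as background, so there is no in-paper argument to compare against. Your proposal must therefore stand on its own, and it does not.

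The decisive gap is exactly where you flag it: the subnet extraction inside (i)~$\Rightarrow$~(iii). You assert that from a $\sigma$-null net one can pass to a subnet that is $uo$-null, citing \cite[Theorem 3.2]{GTX} and the Lebesgue property of $\sigma$. Neither tool does this. \cite[Theorem 3.2]{GTX} concerns the permanence of $uo$-convergence between a vector lattice and a regular sublattice; it says nothing about producing $uo$-convergent subnets from topologically convergent nets. The Lebesgue property runs in the wrong direction (order-null $\Rightarrow$ $\sigma$-null), and there is no general ``Riesz--Fischer'' principle asserting that $\sigma$-null nets in an arbitrary Hausdorff Lebesgue locally solid vector lattice admit $uo$-null subnets. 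Without that bridge, you have no way to pull $\sigma$-convergence back up to $\tau$-convergence, and the implication collapses. The actual argument in \cite{me} is structural rather than subnet-based: one uses that any two Hausdorff Lebesgue topologies induce the same topology on order intervals (cf.\ \cite[Chapter 4]{AB03}), together with the fact that an unbounded topology is completely determined by its restriction to order intervals, to force $\sigma=\tau$ once both are shown to be Lebesgue and $\tau$ is unbounded. So the natural route is (i)~$\Rightarrow$~(ii)~$\Rightarrow$~(iii), not (i)~$\Rightarrow$~(iii) directly.

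Your uniqueness argument is also incorrect: the infimum $\tau\wedge\tau'$ of two Hausdorff locally solid topologies need not be Hausdorff, so ``Hausdorffness is inherited from either factor'' is false. Uniqueness instead drops out of the equivalence with (ii): two minimal topologies are both unbounded Lebesgue, hence determined by their (coinciding) restrictions to order intervals, hence equal. Finally, your sketch of (iii)~$\Rightarrow$~(ii) for the Lebesgue half (``canonical Lebesgue coarsening'') presupposes that this coarsening is Hausdorff, which is precisely the non-trivial content; this step also requires the machinery of \cite{me} or \cite{Conradie05} rather than a one-line invocation.
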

Interestingly, the process of unbounding a topology can convert the greatest topology into the least topology; this happens with the norm topology on an order continuous Banach lattice.

All other undefined terminology is consistent with \cite{AB03}. In particular, we say that a locally solid topology $\tau$ on a vector lattice $X$ is \term{Levi} if $\tau$-bounded increasing nets in $X_+$ have supremum. Levi and \term{monotonically complete} are synonymous; the latter terminology is that of \cite{MN91}, and is used in \cite{GLX}.
\section{Boundedly uo-complete Banach lattices}
Results equating the class of boundedly $uo$-complete Banach lattices to the class of monotonically complete Banach lattices have been acquired, under technical assumptions, by N.~Gao, D.~Leung, V.G.~Troitsky, and F.~Xanthos. The sharpest result is \cite[Proposition 3.1]{GLX}; it states that a Banach lattice whose order continuous dual separates points is boundedly $uo$-complete iff it is monotonically complete. In this section, we remove the restriction on the order continuous dual.

The following question was posed as Problem 2.4 in \cite{LC}:
\begin{question}\label{Hui Li}
Let $(x_{\alpha})$ be a norm bounded positive increasing net in a Banach lattice $X$. Is $(x_{\alpha})$ $uo$-Cauchy in $X$?
\end{question}
If \Cref{Hui Li} is true, it is easily deduced that a Banach lattice is boundedly $uo$-complete iff it is monotonically complete. However, the next example answer this question in the negative, even for sequences.
\begin{example}
Let $S$ be the set of all non-empty finite sequences of natural numbers. For $s\in S$ define $\lambda(s)=\text{length}(s)$. If $s,t\in S$, define $s \leq t$ if $\lambda(s)\leq \lambda(t)$ and $s(i)=t(i)$ for $i=1,\dots, \lambda(s)$. For $s\in S$ with $\lambda(s)=n$ and $i\in \mathbb{N}$, define $s\ast i=(s(1),\dots,s(n),i)$. Put 
\begin{equation}
X=\{x\in \ell^{\infty}(S) : \lim_{i\to \infty} x(s\ast i)=\frac{1}{2}x(s)\ \text{for all}\ s\in S\}.
\end{equation}
It can be verified that $X$ is a closed sublattice of $(\ell^{\infty}(S), \|\cdot\|_{\infty})$ and for $t\in S$ the element $e^t: S \to \mathbb{R}$ defined by
\[
  e^t(s) =
  \begin{cases}
                                   (\frac{1}{2})^{\lambda(s)-\lambda(t)} & \text{if $t\leq s$} \\
                                   
  0  & \text{otherwise}
  \end{cases}
\]
is an element of $X$ with norm $1$. Define $f_1=e^{(1)}, f_2=e^{(1)}\vee e^{(2)}\vee e^{(1,1)}\vee e^{(1,2)}\vee e^{(2,1)}\vee e^{(2,2)}$, and, generally,
$$f_n=\sup\{e^t : \lambda(t)\leq n \ \text{and}\ t(k)\leq n \ \forall k\leq \lambda(t)\}.$$

The sequence $(f_n)$ is increasing and norm bounded by $1$; it was shown in \cite[Example 1.8]{BL88} that $(f_n)$ is not order bounded in $X^u$. Therefore, $(f_n)$ cannot be $uo$-Cauchy in $X$ for if it were then it would be $uo$-Cauchy in $X^u$ and hence order convergent in $X^u$ by \cite[Theorem 3.10]{GTX}. Since it is increasing, it would have supremum in $X^u$; this is a contradiction as $(f_n)$ is not order bounded in $X^u$. 
\end{example}
Under some mild assumptions, however, \Cref{Hui Li} has a positive solution. Recall that a Banach lattice is \term{weakly Fatou} if there exists $K\geq 1$ such that whenever $0\leq x_{\alpha}\uparrow x$, we have $\|x\|\leq K\sup \|x_{\alpha}\|$.
\begin{proposition}\label{weak fatou}
Let $X$ be a weakly Fatou Banach lattice. Then every positive increasing norm bounded net in $X$ is $uo$-Cauchy.
\end{proposition}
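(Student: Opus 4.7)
The natural strategy is to embed $X$ into its universal completion $X^u$, construct $\sup_\alpha x_\alpha$ there, and then transfer $uo$-Cauchyness back to $X$. Since $X$ is order dense (hence regular) in $X^u$, \cite[Theorem 3.2]{GTX} implies that $(x_\alpha)$ is $uo$-Cauchy in $X$ if and only if it is $uo$-Cauchy in $X^u$, and by \cite[Theorem 3.10]{GTX} (used already in the preceding example) a $uo$-Cauchy net in the universally complete lattice $X^u$ is automatically order convergent. Thus it is enough to show that $(x_\alpha)$ has a supremum in $X^u$, for then $x_\alpha \uparrow x := \sup_\alpha x_\alpha$ gives $x_\alpha \xrightarrow{o} x$ and hence $x_\alpha \xrightarrow{uo} x$ in $X^u$.

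I expect the main obstacle to be the construction of this supremum, which is where the weakly Fatou hypothesis will be essential. For each $u\in X_+$ the truncation $(x_\alpha \wedge u)$ is increasing and bounded above by $u$, hence has a supremum $s_u \leq u$ in the Dedekind completion $X^\delta \subseteq X^u$. The $s_u$ form an upward directed collection and a formal interchange of suprema shows that if $\sup_u s_u$ exists in $X^u$ then it coincides with $\sup_\alpha x_\alpha$. Thus the residual task is to establish that $\{s_u : u \in X_+\}$ is order bounded in $X^u$.

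To exploit the weakly Fatou hypothesis I would use the Maeda--Ogasawara representation $X^u \cong C^\infty(K)$ with $K$ extremally disconnected; order-boundedness in $X^u$ corresponds to pointwise finiteness of the supremum function on a dense open subset of $K$. Were this to fail on a non-empty clopen piece, one could extract a norm bounded increasing sequence in $X$ whose upward limit in $X$, if it existed, would violate the weakly Fatou constant, yielding a contradiction. An alternative route passes through the bidual $X^{**}$, which as a dual Banach lattice is Dedekind and monotonically complete and so automatically contains $\hat x = \sup_\alpha x_\alpha$, with norm controlled via weakly Fatou; one then compares $X^{**}$ with $X^u$ via the canonical order continuous lattice homomorphisms between them to import the order bound. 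Either way, once the supremum exists in $X^u$, the net is $uo$-convergent in $X^u$, and hence $uo$-Cauchy in both $X^u$ and $X$ by the transfer cited above.
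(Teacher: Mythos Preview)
Your overall architecture matches the paper exactly: reduce to showing that the increasing net is order bounded in $X^u$, conclude it has a supremum there, hence is $uo$-convergent in $X^u$, hence $uo$-Cauchy in $X$. The gap is that you never actually prove the order-boundedness step; you only describe two possible routes, and neither one is carried through.

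The paper dispatches this step cleanly via the notion of \emph{dominability}. For each $u>0$ one picks $n$ with $\|u\|>K/n$; the weakly Fatou inequality then forbids $(\tfrac{1}{n}x_\alpha)\wedge u \uparrow u$, so there is $w>0$ with $(\tfrac{1}{n}x_\alpha)\wedge u\le u-w$, i.e., $(nu-x_\alpha)^+\ge nw>0$. This is precisely the definition of a dominable net, and \cite[Theorem~7.37]{AB03} converts dominability into order boundedness in $X^u$. Note how the weakly Fatou hypothesis is applied to a net whose supremum \emph{is} an element of $X$ (namely $u$ itself); your Maeda--Ogasawara sketch tries to apply it to a net whose supremum might not live in $X$, and your parenthetical ``if it existed'' is exactly the point at which the argument stalls.

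Your bidual route is more seriously flawed. In $X^{**}$ the supremum $\widehat{x}=\sup_\alpha j(x_\alpha)$ always exists with $\|\widehat{x}\|=\sup_\alpha\|x_\alpha\|$, for \emph{any} Banach lattice $X$: dual Banach lattices are Levi with the full Fatou property, so the weakly Fatou hypothesis plays no role here. If one could then ``import the order bound'' to $X^u$ via canonical maps, the proposition would hold for every Banach lattice, contradicting the example immediately preceding it. In general there is no order-continuous lattice homomorphism linking $X^{**}$ and $X^u$ that respects the embedding of $X$ in both, so this comparison cannot be made to work without additional hypotheses.
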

\begin{proof}
Let $K$ be such that $0\leq x_{\alpha}\uparrow x$ implies $\|x\|\leq K\sup\|x_{\alpha}\|$. Now assume that $0\leq u_{\alpha}\uparrow$ and $\|u_{\alpha}\|\leq 1$. Let $u>0$ and pick $n$ such that $\|u\|>\frac{K}{n}$. If $0\leq (\frac{1}{n}u_{\alpha})\wedge u\uparrow_{\alpha} u$, then $\|u\|\leq \frac{K}{n}$. Therefore, there exists $0<w \in X$ such that $(\frac{1}{n}u_{\alpha})\wedge u \leq u-w$ for all $\alpha$. But then $(nu-u_{\alpha})^+=n\left[u-(\frac{1}{n}u_{\alpha})\wedge u\right]\geq nw>0$ for all $\alpha$, so that $(u_{\alpha})$ is dominable. By \cite[Theorem 7.37]{AB03}, $(u_{\alpha})$ is order bounded in $X^u$, and hence $u_{\alpha}\uparrow \widehat{u}$ for some $\widehat{u}\in X^u$. This proves that $(u_{\alpha})$ is $uo$-Cauchy in $X^u$, hence in $X$.
\end{proof}
\begin{proposition}
Let $X$ be a weakly $\sigma$-Fatou Banach lattice. Then every positive increasing norm bounded sequence in $X$ is $uo$-Cauchy.
\end{proposition}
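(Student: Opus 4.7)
The plan is to imitate the proof of \Cref{weak fatou} almost verbatim, with the weakly Fatou property replaced by its sequential counterpart. Let $K \geq 1$ be a constant witnessing that $X$ is weakly $\sigma$-Fatou, and let $(u_n)$ be a positive increasing sequence in $X$ with $\|u_n\| \leq 1$. Given an arbitrary $u > 0$ in $X$, I would first pick a positive integer $m$ with $\|u\| > K/m$.

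The core step is a dichotomy. If the increasing sequence $\bigl(\tfrac{1}{m}u_n \wedge u\bigr)_n$ had supremum equal to $u$, then the weakly $\sigma$-Fatou inequality would yield $\|u\| \leq K\sup_n\bigl\|\tfrac{1}{m}u_n \wedge u\bigr\| \leq K/m$, contradicting the choice of $m$. Therefore, there must exist $0 < w \in X$ such that $\tfrac{1}{m}u_n \wedge u \leq u - w$ for every $n$, from which $(mu - u_n)^+ = m\bigl[u - \tfrac{1}{m}u_n \wedge u\bigr] \geq mw > 0$ for all $n$. This says that $\{u_n : n \in \NN\}$ is dominable.

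To conclude, I would invoke \cite[Theorem 7.37]{AB03}, which states that a dominable subset of $X_+$ is order bounded in $X^u$. Hence $(u_n)$ is a monotone order bounded sequence in $X^u$ and so $u_n \uparrow \hat u$ for some $\hat u \in X^u$; this makes $(u_n)$ $uo$-Cauchy in $X^u$, hence $uo$-Cauchy in $X$.

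I do not anticipate a serious obstacle here: the argument of \Cref{weak fatou} is pointwise once the weakly Fatou inequality has been applied, and \cite[Theorem 7.37]{AB03} is phrased for arbitrary dominable subsets of $X_+$, so specializing from a net to a sequence requires no additional work. The only place in \Cref{weak fatou} where the net-versus-sequence distinction was relevant is the single application of the Fatou-type inequality to the increasing family $\bigl(\tfrac{1}{m}u_n \wedge u\bigr)_n$, and that is exactly the step covered by the weakened hypothesis.
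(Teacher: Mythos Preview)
Your proposal is correct and matches the paper's intended argument exactly: the paper itself omits the proof, stating only that it is similar to that of \Cref{weak fatou}, and what you have written is precisely that sequential adaptation. The one place where the hypothesis enters---the weakly $\sigma$-Fatou inequality applied to the increasing sequence $\bigl(\tfrac{1}{m}u_n\wedge u\bigr)_n$---is exactly where you use it, and the remaining steps (dominability, \cite[Theorem~7.37]{AB03}, order boundedness in $X^u$, passage of $uo$-Cauchy back to $X$) are unchanged from the net case.
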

\begin{proof}
The proof is similar and, therefore, omitted.
\end{proof}
Even though \Cref{Hui Li} is false, the equivalence between boundedly $uo$-complete and Levi still stands. We show that now: 
\begin{theorem}\label{sequ boundedly uo}
Let $X$ be a Banach lattice. TFAE:
\begin{enumerate}
\item $X$ is $\sigma$-Levi;
\item $X$ is sequentially boundedly uo-complete;
\item Every increasing norm bounded $uo$-Cauchy sequence in $X_+$ has a supremum. 
\end{enumerate}
\end{theorem}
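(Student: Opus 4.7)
The plan is to establish the cycle (i) $\Rightarrow$ (ii) $\Rightarrow$ (iii) $\Rightarrow$ (i); the first follows from a squeeze, the second is immediate, and the third is where the main work lies.

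For (i) $\Rightarrow$ (ii), I would begin by splitting the given norm-bounded $uo$-Cauchy sequence into its positive and negative parts (both of which are norm-bounded and $uo$-Cauchy by continuity of the lattice operations) and reduce to the case $(x_n) \subseteq X_+$. Since $X$ is $\sigma$-Levi, it is also $\sigma$-Dedekind complete, so the tail supremum $y_m := \sup_{n \geq m} x_n$ and tail infimum $z_m := \inf_{n \geq m} x_n$ both exist in $X$ --- the former by applying $\sigma$-Levi to the increasing norm-bounded sequence $(\bigvee_{n=m}^N x_n)_N$, the latter by $\sigma$-Dedekind completeness applied to the order-bounded decreasing sequence $(\bigwedge_{n=m}^N x_n)_N$. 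A standard lattice identity gives $y_m - z_m = \sup_{n,k \geq m} |x_n - x_k|$, and distributivity of $\wedge$ over directed suprema yields
\[
(y_m - z_m) \wedge u = \sup_{n,k \geq m}(|x_n - x_k| \wedge u)
\]
for every $u \in X_+$. The $uo$-Cauchy hypothesis forces this quantity to order-converge to $0$, so $y_m - z_m \xrightarrow{uo} 0$; being decreasing in $X_+$, it $\downarrow 0$ in order. The squeeze $z_m \leq x_m \leq y_m$ then makes $(x_m)$ order-convergent, and hence $uo$-convergent.

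The implication (ii) $\Rightarrow$ (iii) is immediate: an increasing, norm-bounded, $uo$-Cauchy sequence in $X_+$ must be $uo$-convergent by (ii), and the $uo$-limit of an increasing sequence is its supremum.

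For (iii) $\Rightarrow$ (i), I would first observe that (iii) forces $X$ to be $\sigma$-Dedekind complete --- any order-bounded increasing sequence is automatically $uo$-Cauchy, since its supremum exists in $X^u$. Using this, I plan to prove (iii) $\Rightarrow$ (ii) via a ``liminf'' construction: for $(x_n) \subseteq X_+$ norm-bounded and $uo$-Cauchy with $uo$-limit $\hat{x} \in X^u$, the sequence $z_m := \inf_{n \geq m} x_n$ lies in $X$ by $\sigma$-Dedekind completeness, is increasing, and is norm-bounded. The $uo$-Cauchy condition pushed through the bound $(\hat{x} - z_m) \wedge u \leq \sup_{k \geq m}((\hat{x} - x_k)^+ \wedge u)$ yields $z_m \uparrow \hat{x}$ in $X^u$, making $(z_m)$ order-bounded in $X^u$ hence $uo$-Cauchy in $X$, so that (iii) produces $\sup z_m = \hat{x} \in X$ --- establishing (ii). It remains to upgrade (ii) to (i): for an increasing norm-bounded $(x_n) \subseteq X_+$ that is order-bounded in $X^u$, the conclusion is direct from (ii). The residual and main obstacle is when $(x_n)$ is not order-bounded in $X^u$: by \cite[Theorem 7.37]{AB03} this corresponds to $(x_n)$ being non-dominable, i.e., the existence of some $u > 0$ with $x_n \wedge cu \uparrow cu$ in $X$ for every $c > 0$. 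The plan is to extract from this failure of dominability an auxiliary positive, increasing, norm-bounded, $uo$-Cauchy sequence in $X$ whose supremum falls outside of $X$, thereby contradicting (iii). This last construction is the delicate part, since straightforward candidates such as the diagonal $x_n \wedge nu$ inherit the same non-dominability and so remain not $uo$-Cauchy.
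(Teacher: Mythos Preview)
There are two genuine gaps.

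\textbf{(i) $\Rightarrow$ (ii):} Your tail supremum $y_m = \sup_{n\geq m} x_n$ need not exist. The $\sigma$-Levi property requires the increasing sequence $\bigl(\bigvee_{n=m}^N x_n\bigr)_N$ to be \emph{norm} bounded, but norm boundedness of $(x_n)$ does not pass to finite suprema: take $x_n = e_n$ in $\ell^1$, which is positive, norm bounded by $1$, and $uo$-null (hence $uo$-Cauchy), yet $\bigl\|\bigvee_{n=1}^N e_n\bigr\| = N$. So your squeeze never gets started. The paper handles this by passing to the principal band $B_e$ generated by $e = \sum 2^{-n}x_n/(1+\|x_n\|)$, working with the order bounded truncations $(x_n \wedge me)_n$ --- which \emph{are} order-Cauchy for each fixed $m$ --- and then invoking that $\sigma$-Levi implies weakly $\sigma$-Fatou to control the norms of the resulting limits $u_m$ before letting $m\to\infty$.

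\textbf{(iii) $\Rightarrow$ (i):} You correctly isolate the obstruction --- an increasing norm-bounded sequence that fails to be order bounded in $X^u$ is not $uo$-Cauchy, so (iii) gives no direct grip on it --- but you do not resolve it; the proposal ends with the admission that the needed auxiliary sequence has not been constructed. The paper sidesteps this entirely via \cite[Theorem 2.4]{AW97}: a Banach lattice is $\sigma$-Levi iff every \emph{laterally} increasing norm-bounded sequence in $X_+$ has a supremum. Laterally increasing sequences automatically have a supremum in $X^u$ by \cite[Proposition 2.2]{AW97}, hence are $uo$-Cauchy in $X$, and then (iii) applies directly. Your proposed route --- manufacturing a $uo$-Cauchy witness from a non-dominable sequence --- would essentially amount to reproving this Abramovich--Wickstead reduction, and no mechanism for doing so is given.
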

\begin{proof}
(i)$\Rightarrow$(ii): Let $(x_n)$ be a norm bounded $uo$-Cauchy sequence in $X$. WLOG, $(x_n)$ is positive; otherwise consider positive and negative parts. Define $e=\sum_{n=1}^{\infty}\frac{1}{2^n}\frac{x_n}{1+\|x_n\|}$ and consider $B_e$, the band generated by $e$. Then $(x_n)$ is still norm bounded and $uo$-Cauchy in $B_e$. Also, $B_e$ has the $\sigma$-Levi property for if $0\leq y_n\uparrow$ is a norm bounded sequence in $B_e$, then $y_n\uparrow y$ for some $y\in X$ as $X$ is $\sigma$-Levi. Since $B_e$ is a band, $y\in B_e$ and $y_n\uparrow y$ in $B_e$. We next show that there exists $u\in B_e$ such that $x_n\xrightarrow{uo}u$ in $B_e$, and hence in $X$. 

For each $m, n, n'\in \mathbb{N}$, since $|x_n\wedge me-x_{n'}\wedge me|\leq |x_n-x_{n'}|\wedge me$, the sequence $(x_n\wedge me)_n$ is order Cauchy, hence order converges to some $u_m$ in $B_e$ since the $\sigma$-Levi property implies $\sigma$-order completeness. The sequence $(u_m)$ is increasing and 
$$\|u_m\|\leq K\sup_n\|x_n\wedge me\|\leq K\sup_n\|x_n\|<\infty$$
where we use that $\sigma$-Levi implies weakly $\sigma$-Fatou. Since $B_e$ is $\sigma$-Levi, $(u_m)$ increases to an element $u\in B_e$. Fix $m$. For any $N,N'$ define $x_{N,N'}=\sup_{n\geq N, n'\geq N'}|x_n-x_{n'}|\wedge e$. Since $(x_n)$ is $uo$-Cauchy, $x_{N,N'}\downarrow 0$. Now, for each $m$, 
$$ |x_n\wedge me-x_{n'}\wedge me|\wedge e\leq |x_n-x_{n'}|\wedge e\leq x_{N,N'} \ \forall n \geq N, n'\geq N'.$$
Taking order limit in $n'$ yields
$$|x_n\wedge me- u_m|\wedge e\leq x_{N,N'}$$
Taking order limit in $m$ now yields:
$$|x_n-u|\wedge e \leq x_{N,N'}, \ \forall n\geq N,$$
from which it follows that $|x_n-u|\wedge e\xrightarrow{o}0$ in $B_e$. This yields $x_n\xrightarrow{uo} u$ in $B_e$ since $e$ is a weak unit of $B_e$.

The implication (ii)$\Rightarrow$(iii) is clear. For the last implication it suffices, by \cite[Theorem 2.4]{AW97}, to verify that every norm bounded laterally increasing sequence in $X_+$ has a supremum. Let $(x_n)$ be a norm bounded laterally increasing sequence in $X_+$. By \cite[Proposition 2.2]{AW97}, $(x_n)$ has supremum in $X^u$, hence is $uo$-Cauchy in $X^u$. It follows that $(x_n)$ is $uo$-Cauchy in $X$ and, therefore, by assumption, $uo$-converges to some $x\in X$. It is then clear that $x_n\uparrow x$ in $X$.
\end{proof}
\begin{theorem}
Let $X$ be a Banach lattice. TFAE:
\begin{enumerate}
\item $X$ is Levi;
\item $X$ is boundedly $uo$-complete;
\item Every increasing norm bounded $uo$-Cauchy net in $X_+$ has a supremum.
\end{enumerate}
\end{theorem}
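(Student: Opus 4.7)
The plan is to mirror the sequential theorem, proving (i)$\Rightarrow$(ii)$\Rightarrow$(iii)$\Rightarrow$(i). The sequential argument for (i)$\Rightarrow$(ii) relied on the countable weak-unit construction $e=\sum 2^{-n}x_n/(1+\|x_n\|)$, which has no net analogue; I would replace it with a shorter liminf-style argument, exploiting the fact that the Levi property on a Banach lattice implies Dedekind completeness.

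For (i)$\Rightarrow$(ii), after decomposing into positive and negative parts (which preserves both norm boundedness and the $uo$-Cauchy condition), I reduce to a positive norm-bounded $uo$-Cauchy net $(x_\alpha)\subseteq X_+$. Since Levi implies Dedekind completeness (the net of finite suprema of any order-bounded subset is norm-bounded and increasing), the element $\bar x_\alpha := \inf_{\beta\geq\alpha} x_\beta$ exists in $X$, lies in $[0,x_\alpha]$, and is increasing in $\alpha$. By Levi, $\bar x_\alpha\uparrow x^*$ for some $x^*\in X_+$. To verify $x_\alpha\xrightarrow{uo}x^*$, for each $u\in X_+$ I would use the decomposition
\[
|x_\alpha-x^*|\wedge u \;\leq\; (x_\alpha-\bar x_\alpha)\wedge u + (x^*-\bar x_\alpha)\wedge u,
\]
note that $(x^*-\bar x_\alpha)\wedge u\leq x^*-\bar x_\alpha\downarrow 0$, and, via the Riesz-space identities $a-\inf_\beta b_\beta=\sup_\beta(a-b_\beta)$ and distributivity of $\wedge u$ over suprema, bound
\[
(x_\alpha-\bar x_\alpha)\wedge u \;=\; \sup_{\beta\geq\alpha}\bigl((x_\alpha-x_\beta)\wedge u\bigr) \;\leq\; \sup_{\beta\geq\alpha}\bigl(|x_\alpha-x_\beta|\wedge u\bigr),
\]
which is order null by the $uo$-Cauchy hypothesis (the $uo$-Cauchy condition gives $\sup_{\beta,\gamma\geq\alpha}(|x_\beta-x_\gamma|\wedge u)\downarrow_\alpha 0$ in the Dedekind complete space $X$, and the displayed suprema are dominated by this).

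The implication (ii)$\Rightarrow$(iii) is immediate since an increasing $uo$-convergent net has its $uo$-limit as supremum. For (iii)$\Rightarrow$(i), I would follow the sequential proof using the net analogues of \cite[Proposition 2.2]{AW97} and \cite[Theorem 2.4]{AW97}: it suffices to show every norm-bounded laterally increasing net in $X_+$ has a supremum in $X$, and such a net has a supremum in $X^u$ and is therefore $uo$-Cauchy in $X^u$, hence in $X$, and so has a supremum in $X$ by (iii). The main obstacle is (i)$\Rightarrow$(ii): the sequential weak-unit trick is unavailable, but the liminf construction above circumvents it cleanly and, as a bonus, avoids the weak-Fatou-type hypothesis invoked in the sequential proof.
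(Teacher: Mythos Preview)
Your argument is correct, and for (ii)$\Rightarrow$(iii) and (iii)$\Rightarrow$(i) it matches the paper exactly: the paper also says (ii)$\Rightarrow$(iii) is clear and handles (iii)$\Rightarrow$(i) via the Abramovich--Wickstead reduction to laterally increasing nets (the paper cites \cite[Theorem 2.3]{AW97}, the net analogue of the Theorem~2.4 used in the sequential case, which is precisely what you describe).

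The genuine difference is in (i)$\Rightarrow$(ii). The paper does not give an argument at all; it simply invokes \cite[Proposition 3.1]{GLX} for the direction ``Levi $\Rightarrow$ boundedly $uo$-complete''. Your liminf construction is a self-contained replacement for that citation. It works cleanly: Levi gives Dedekind completeness, so the tails $\bar x_\alpha=\inf_{\beta\ge\alpha}x_\beta$ and the Cauchy majorants $\sup_{\beta,\gamma\ge\alpha}|x_\beta-x_\gamma|\wedge u$ exist; Levi again gives $\bar x_\alpha\uparrow x^*$; and the splitting
\[
|x_\alpha-x^*|\wedge u \le (x_\alpha-\bar x_\alpha)\wedge u + (x^*-\bar x_\alpha)\wedge u
\]
(valid since $(p+q)\wedge u\le p\wedge u+q\wedge u$ for $p,q,u\ge 0$, which follows from Birkhoff's inequality) together with the infinite distributive law handles the rest. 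Your remark that the $uo$-Cauchy hypothesis yields $\sup_{\beta,\gamma\ge\alpha}(|x_\beta-x_\gamma|\wedge u)\downarrow_\alpha 0$ uses Dedekind completeness in exactly the right way. What this buys you over the paper's route is independence from \cite{GLX}: you never need to know whether the separation hypothesis in that reference is needed for this direction or is automatic from Levi. What the paper's route buys is brevity---one line instead of a paragraph.
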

\begin{proof}
If $X$ is Levi, then $X$ is boundedly $uo$-complete by \cite[Proposition 3.1]{GLX}. It is clear that (ii)$\Rightarrow$(iii) and the proof of (iii)$\Rightarrow$(i) is the same as in the last theorem but with \cite[Theorem 2.4]{AW97} replaced with \cite[Theorem 2.3]{AW97}.
\end{proof}

\section{Completeness of minimal topologies}
Throughout this section, $X$ is a vector lattice and $\tau$ denotes a locally solid topology on $X$. We begin with a brief discussion on relations between minimal topologies and the $B$-property. \Cref{Cauchy bounded} will be of  importance as many properties of locally solid topologies are stated in terms of positive increasing nets. For minimal topologies, these properties permit a uniform and efficient treatment.

The $B$-property was introduced as property (B,iii) by W.A.J.~Luxemburg and A.C.~Zaanen in \cite{LZ64}. It is briefly studied in \cite{AB03} and, in particular, it is shown that the Lebesgue property does not imply the $B$-property. We prove, however, that if $\tau$ is unbounded then this implication does indeed hold true:
\begin{definition}
A locally solid vector lattice $(X,\tau)$ satisfies the \term{B-property} if it follows from $0 \leq x_n \uparrow$ in $X$ and $(x_n)$ $\tau$-bounded that $(x_n)$ is $\tau$-Cauchy. An equivalent definition is obtained if sequences are replaced with nets.
\end{definition}
\begin{proposition}\label{B}
If $X$ is a vector lattice admitting a minimal topology $\tau$, then $\tau$ satisfies the B-property.
\end{proposition}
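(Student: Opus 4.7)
The plan is to reduce the $B$-property to order boundedness of $(x_\alpha)$ in the universal completion $X^u$. Once that is established, Dedekind completeness of $X^u$ gives $x_\alpha\uparrow x$ in $X^u$, so $(x_\alpha)$ is order Cauchy, hence $uo$-Cauchy, in $X^u$; by regularity of $X$ in $X^u$ it is $uo$-Cauchy in $X$, and \Cref{a.e. implies measure} then upgrades $uo$-Cauchyness to $\tau$-Cauchyness, which is exactly the $B$-property.

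For the order-boundedness step I would use the dominability criterion \cite[Theorem 7.37]{AB03} already invoked in the proof of \Cref{weak fatou}. Suppose, for contradiction, that $(x_\alpha)$ is not dominable: there exists $u\in X_+$, $u>0$, such that for every $n\in\NN$ the decreasing net $(nu-x_\alpha)^+$ has no strictly positive lower bound in $X$. Since $X$ is order dense in $X^u$ this is equivalent to $(nu-x_\alpha)^+\downarrow 0$ in $X^u$, and a scaling argument extends this to all real $\lambda>0$ in place of $n$. An order-null decreasing net in $X^u$ is in particular $uo$-null in $X$, so \Cref{a.e. implies measure} gives $(\lambda u-x_\alpha)^+\goestau 0$ and hence, via $x_\alpha\wedge\lambda u=\lambda u-(\lambda u-x_\alpha)^+$, that $(x_\alpha/\lambda)\wedge u\goestau u$ for every $\lambda>0$.

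The contradiction now comes from $\tau$-boundedness combined with Hausdorffness of $\tau$. Pick a solid $\tau$-neighbourhood $V$ of $0$ with $u\notin\overline V$; by $\tau$-boundedness there is $\lambda>0$ with $x_\alpha/\lambda\in V$ for all $\alpha$, and solidness of $V$ forces $(x_\alpha/\lambda)\wedge u\in V$ for all $\alpha$. The $\tau$-limit $u$ of this net must therefore lie in $\overline V$, contradicting our choice of $V$.

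The main obstacle is this transfer from $\tau$-boundedness to order boundedness in $X^u$: both halves of minimality must cooperate at exactly this point. Unboundedness of $\tau$ is what keeps the truncated net $(x_\alpha/\lambda)\wedge u$ inside every solid $\tau$-neighbourhood, while the Lebesgue-type implication inside \Cref{a.e. implies measure} is what turns the order-null decreasing nets $(\lambda u-x_\alpha)^+$ into $\tau$-null nets. Once order boundedness in $X^u$ has been secured, the remainder of the argument is purely formal.
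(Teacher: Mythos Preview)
Your argument is correct and follows the same skeleton as the paper: $\tau$-bounded $\Rightarrow$ dominable $\Rightarrow$ order bounded in $X^u$ $\Rightarrow$ $uo$-Cauchy in $X$ $\Rightarrow$ $\tau$-Cauchy via \Cref{a.e. implies measure}. The only difference is that the paper obtains dominability in one line by citing \cite[Theorem 7.50]{AB03} (which says that in a Hausdorff Lebesgue space every $\tau$-bounded subset of $X_+$ is dominable), whereas you reprove that implication by hand; your contradiction argument with the solid neighbourhood $V$ is essentially the proof of that theorem.

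Two small remarks on the exposition. First, the detour through $X^u$ in the dominability step is unnecessary: a decreasing net in $X_+$ with no strictly positive lower bound in $X$ already satisfies $\downarrow 0$ \emph{in $X$}, so the Lebesgue property alone gives $(\lambda u-x_\alpha)^+\goestau 0$. Second, your closing attribution of roles is slightly off: what keeps $(x_\alpha/\lambda)\wedge u$ inside $V$ is just solidness of $V$, not unboundedness of $\tau$; unboundedness is used (via \Cref{a.e. implies measure}) only in the final passage from $uo$-Cauchy to $\tau$-Cauchy.
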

\begin{proof}
Suppose $\tau$ is minimal and $(x_n)$ is a $\tau$-bounded sequence satisfying $0 \leq x_n\uparrow$. By \cite[Theorem 7.50]{AB03}, $(x_n)$ is dominable. By \cite[Theorem 7.37]{AB03}, $(x_n)$ is order bounded in $X^u$ so that $x_n\xrightarrow{uo}u$ for some $u\in X^u$. In particular, $(x_n)$ is $uo$-Cauchy in $X^u$. It follows that $(x_n)$ is $uo$-Cauchy in $X$. Since $\tau$ is Lebesgue, $(x_n)$ is $u\tau$-Cauchy in $X$. Finally, since $\tau$ is unbounded, $(x_n)$ is $\tau$-Cauchy in $X$.
\end{proof}
\begin{proposition}\label{Cauchy bounded}
Let $X$ be a vector lattice admitting a minimal topology $\tau$, and $(x_{\alpha})$ an increasing net in $X_+$. TFAE:
\begin{enumerate}
\item $(x_{\alpha})$ is $\tau$-bounded;
\item $(x_{\alpha})$ is $\tau$-Cauchy.
\end{enumerate}
\end{proposition}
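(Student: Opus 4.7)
The plan is to dispatch (ii) $\Rightarrow$ (i) by a standard TVS argument and to obtain (i) $\Rightarrow$ (ii) as an immediate consequence of \Cref{B}.

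The easy direction (ii) $\Rightarrow$ (i) is the standard fact that a Cauchy net in any topological vector space is topologically bounded. Since every locally solid topology is a TVS topology, this applies to $\tau$ directly: given a balanced $\tau$-neighbourhood $U$ of $0$, pick balanced $V$ with $V+V\subseteq U$ and $\alpha_0$ with $x_\alpha - x_{\alpha_0}\in V$ for all $\alpha \geq \alpha_0$; a single scalar absorbs $x_{\alpha_0}$ into $V$ and hence the tail into $U$, and one handles the initial segment separately.

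For (i) $\Rightarrow$ (ii), the parenthetical remark in the definition of the $B$-property already asserts that the sequence and net formulations are equivalent. Since \Cref{B} establishes that a minimal topology $\tau$ satisfies the $B$-property in the sequence form, it also satisfies it in the net form, which is precisely statement (ii).

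If one prefers not to appeal to the ``sequences vs.\ nets'' equivalence built into the definition, the proof of \Cref{B} transfers verbatim to nets. Given a $\tau$-bounded increasing net $(x_\alpha)\subseteq X_+$: \cite[Theorem 7.50]{AB03} gives dominability, \cite[Theorem 7.37]{AB03} upgrades this to order boundedness in $X^u$, Dedekind completeness of $X^u$ produces $u \in X^u$ with $x_\alpha \uparrow u$, and so $(x_\alpha)$ is $uo$-Cauchy in $X^u$ and hence in $X$. Finally, since $\tau$ is minimal, \Cref{a.e. implies measure} ($uo$-null nets are $\tau$-null) applied to the double-indexed net $(x_\alpha - x_{\alpha'})$ yields $\tau$-Cauchyness. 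The only point requiring any care is the transfer of the two cited results of \cite{AB03} from sequences to nets, which in the dominability framework is routine; this is the same (non-)obstacle already hidden in \Cref{B}.
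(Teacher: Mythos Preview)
Your direction (i)$\Rightarrow$(ii) via \Cref{B} matches the paper exactly; the paper simply writes ``It remains to prove (ii)$\Rightarrow$(i)'', taking the forward implication as established by the preceding $B$-property result.

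For (ii)$\Rightarrow$(i) your approach is genuinely different from, and more elementary than, the paper's. The paper extends $\tau$ to a complete Hausdorff Lebesgue topology $\tau^u$ on $X^u$ (via \cite[Corollary~5.7]{me}), converges the Cauchy net to some $x\in X^u$, deduces $x_\alpha\uparrow x$ so that $(x_\alpha)$ is order bounded in $X^u$, and then invokes dominability (\cite[Theorem~7.37]{AB03}) together with \cite[Theorem~5.2]{me} to conclude $\tau$-boundedness in $X$. Your direct TVS argument bypasses the universal-completion machinery entirely.

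However, your argument for (ii)$\Rightarrow$(i) has a gap as written. The assertion that Cauchy nets in a TVS are bounded is \emph{false} in general: unlike sequences, nets have no finite ``initial segment'' to dispose of, and one can easily produce a convergent net (say, indexed by $\mathbb{Z}$ with its usual order, eventually zero but unbounded for negative indices) whose range is unbounded. What rescues the argument here is precisely the hypotheses you did not invoke: monotonicity and local solidity. Once your standard computation yields $x_\alpha\in\mu U$ for all $\alpha\geq\alpha_0$ with $U$ a \emph{solid} neighbourhood of zero, take an arbitrary index $\alpha$ and, by directedness, choose $\beta\geq\alpha,\alpha_0$; then $0\leq x_\alpha\leq x_\beta\in\mu U$, and solidity of $\mu U$ forces $x_\alpha\in\mu U$. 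With this single line your proof is complete and is indeed shorter than the paper's route through $X^u$.
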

\begin{proof}
It remains to prove (ii)$\Rightarrow$(i): Let $(x_{\alpha})$ be an increasing $\tau$-Cauchy net in $X_+$. By \cite[Corollary 5.7]{me}, $\tau$ extends to a complete Hausdorff Lebesgue topology $\tau^u$ on $X^u$. It follows that $x_{\alpha}\xrightarrow{\tau^u}x$ for some $x\in X^u$. Since $(x_{\alpha})$ is increasing, $x_{\alpha}\uparrow x$ in $X^u$. In particular, $(x_{\alpha})$ is order bounded in $X^u$, hence dominable in $X$ by \cite[Theorem 7.37]{AB03}. By \cite[Theorem 5.2]{me}, $(x_{\alpha})$ is $\tau$-bounded in $X$.
\end{proof}

Recall the following definition, taken from  \cite[Definition 2.43]{AB03}.
\begin{definition}
A locally solid vector lattice $(X,\tau)$ is said to satisfy the \textbf{\textit{monotone completeness property (MCP)}} if every increasing $\tau$-Cauchy net of $X_+$ is $\tau$-convergent in $X$. The $\sigma$-MCP is defined analogously with nets replaced with sequences. 
\end{definition}
\begin{remark}
By \Cref{Cauchy bounded}, a minimal topology has MCP iff it is Levi.
\end{remark}
\begin{proposition}\label{MCP}
Let $\tau$ be a Hausdorff locally solid topology on $X$. If $u\tau$ satisfies MCP then so does $\tau$. If $u\tau$ satisfies $\sigma$-MCP then so does $\tau$.
\end{proposition}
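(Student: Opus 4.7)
The plan is to use the assumed $u\tau$-MCP to produce a candidate limit $x\in X$, and then to exploit the monotonicity of the net to upgrade $u\tau$-convergence into $\tau$-convergence. I will write out only the net version; the $\sigma$-MCP assertion is proved identically with $\alpha$ replaced by $n$.

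So let $(x_{\alpha})$ be an increasing $\tau$-Cauchy net in $X_+$. Since every solid $\tau$-neighborhood $U_i$ of zero is contained in each $U_{i,u}$ (because $|x|\wedge u\leq |x|$ and $U_i$ is solid), $u\tau$ is coarser than $\tau$, so $(x_{\alpha})$ is $u\tau$-Cauchy. The assumed $u\tau$-MCP furnishes some $x\in X$ with $x_{\alpha}\goesutau x$. Because $u\tau$ is Hausdorff (inherited from $\tau$) and locally solid, its positive cone is $u\tau$-closed, and the standard argument for increasing topologically convergent nets in a Hausdorff locally solid vector lattice then yields $x_{\alpha}\uparrow x$ in $X$.

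The crucial step is upgrading this $u\tau$-convergence to $\tau$-convergence. Let $V$ be an arbitrary solid $\tau$-neighborhood of zero, fix any index $\alpha_0$, and set $u:=x-x_{\alpha_0}$, which lies in $X_+$ since $x_{\alpha_0}\leq x$. By the very definition of $u\tau$, $x_{\alpha}\goesutau x$ forces $|x-x_{\alpha}|\wedge u \goestau 0$, so eventually $(x-x_{\alpha})\wedge u\in V$. Monotonicity now collapses the meet: for $\alpha\geq\alpha_0$ one has $0\leq x-x_{\alpha}\leq x-x_{\alpha_0}=u$, hence $(x-x_{\alpha})\wedge u = x-x_{\alpha}$. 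Combining these two observations yields $x-x_{\alpha}\in V$ eventually, which is exactly $x_{\alpha}\goestau x$. I do not foresee any real obstacle; the only clever move is the choice $u=x-x_{\alpha_0}$, designed so that the meet appearing in the definition of $u\tau$ degenerates onto the tail of the net and transports the $u\tau$-information back to $\tau$-convergence.
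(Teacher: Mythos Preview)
Your proof is correct and follows the same approach as the paper. Both arguments pass to $u\tau$ to obtain a limit $x$ with $x_{\alpha}\uparrow x$, and then upgrade to $\tau$-convergence; the paper states this last step in one line, while your choice $u=x-x_{\alpha_0}$ makes explicit the underlying fact (noted just after the proof in the paper) that $\tau$ and $u\tau$ coincide on order intervals.
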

\begin{proof}
Suppose $0\leq x_{\alpha}\uparrow$ is a $\tau$-Cauchy net. It is then $u\tau$-Cauchy and hence $u\tau$-converges to some $x\in X$. Therefore, $x_{\alpha}\uparrow x$ and $x_{\alpha}\xrightarrow{\tau}x$. Replacing nets with sequences yields the $\sigma$-analogue.
\end{proof}
Recall by \cite[Theorem 2.46 and Exercise 2.11]{AB03} that a Hausdorff locally solid vector lattice $(X,\tau)$ is (sequentially) complete iff order intervals are (sequentially) complete and $\tau$ has ($\sigma$)-MCP. Therefore, since $\tau$-convergence agrees with $u\tau$-convergence on order intervals, $u\tau$ being (sequentially) complete implies $\tau$ is (sequentially) complete.

\begin{lemma}\label{unbounded}
Let $(X,\tau)$ be a Hausdorff locally solid vector lattice. If $\tau$ is unbounded then TFAE: 
\begin{enumerate}
\item $\tau$ has MCP and is pre-Lebesgue;
\item $\tau$ is Lebesgue and Levi.
\end{enumerate}
\end{lemma}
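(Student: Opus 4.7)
The plan is to treat the two implications separately, with the easy direction (ii)$\Rightarrow$(i) first. Lebesgue always implies pre-Lebesgue (a standard implication from \cite{AB03}), so only MCP needs to be checked. If $(x_{\alpha})$ is an increasing $\tau$-Cauchy net in $X_+$, then it is $\tau$-bounded; the Levi property yields $x\in X$ with $x_{\alpha}\uparrow x$, and then Lebesgue gives $x_{\alpha}\xrightarrow{\tau}x$.

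For (i)$\Rightarrow$(ii), the crucial observation is that pre-Lebesgue and MCP alone already force Lebesgue, with the unboundedness hypothesis needed only afterward to recover Levi. Given any net $x_{\alpha}\downarrow 0$ in $X$, I would fix an index $\alpha_0$ and consider the ``reflected'' net $y_{\alpha}:=x_{\alpha_0}-x_{\alpha}$ for $\alpha\geq\alpha_0$. This net is increasing in $X_+$ and order bounded by $x_{\alpha_0}$; moreover, since $\inf_{\alpha\geq\alpha_0}x_{\alpha}=0$, one has $y_{\alpha}\uparrow x_{\alpha_0}$. Pre-Lebesgue then makes $(y_{\alpha})$ a $\tau$-Cauchy net, and MCP produces a $\tau$-limit $y\in X$. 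Because increasing $\tau$-convergent nets in a Hausdorff locally solid space converge to their suprema, $y_{\alpha}\uparrow y$, forcing $y=x_{\alpha_0}$ and therefore $x_{\alpha}=x_{\alpha_0}-y_{\alpha}\xrightarrow{\tau}0$. This establishes the Lebesgue property.

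With Lebesgue now in hand, the assumption that $\tau$ is unbounded lets \Cref{a.e. implies measure} declare $\tau$ minimal. Any $\tau$-bounded increasing net $0\leq x_{\alpha}\uparrow$ in $X_+$ is therefore $\tau$-Cauchy by \Cref{Cauchy bounded}, $\tau$-convergent to some $x$ by MCP, and hence $x_{\alpha}\uparrow x$ by the same Hausdorff-locally-solid fact; this gives Levi. The main obstacle is the reflection argument of the middle paragraph: it is what allows MCP---which natively sees only \emph{increasing} Cauchy nets---to control a decreasing net, and without pre-Lebesgue there is no obvious way to produce the $\tau$-Cauchy-ness needed to feed MCP.
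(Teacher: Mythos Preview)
Your proof is correct, but it follows a genuinely different route from the paper's argument.

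The paper handles the nontrivial direction (i)$\Rightarrow$(ii) by appealing to \cite[Theorem~2.5]{c0}, which reduces the equivalence to showing that $(X,\tau)$ contains no lattice copy of $c_0$; the unboundedness hypothesis then disposes of any such copy because the standard unit vectors, being disjoint, would be $\tau$-null by \cite[Theorem~4.2]{me}, contradicting the fact that they are not null in $c_0$. In other words, the paper imports a ready-made ``no $c_0$'' criterion and verifies it.

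Your argument is more self-contained. The reflection trick $y_\alpha=x_{\alpha_0}-x_\alpha$ shows directly that pre-Lebesgue~$+$~MCP~$\Rightarrow$~Lebesgue, with no need for either the unboundedness hypothesis or the Drewnowski--Labuda machinery (this implication is essentially classical, cf.\ \cite[Theorem~3.23]{AB03} for the net form of pre-Lebesgue). You then recover Levi by invoking \Cref{a.e. implies measure} to get minimality and \Cref{Cauchy bounded} to convert $\tau$-boundedness into $\tau$-Cauchyness. The trade-off is that the paper's proof is a one-line reduction to an external theorem, while yours stays internal to the paper at the cost of routing through \Cref{Cauchy bounded}; note that the paper explicitly presents \Cref{unbounded} as ``another way'' to see MCP~$\Leftrightarrow$~Levi for minimal topologies, so the author was deliberately avoiding the \Cref{Cauchy bounded} route here. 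Both arguments are valid.
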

\begin{proof}
It is sufficient, by \cite[Theorem 2.5]{c0}, to prove that $(X,\tau)$ contains no lattice copy of $c_0$. Suppose, towards contradiction, that $X$ does contain a lattice copy of $c_0$, i.e., there is a homeomorphic Riesz isomorphism from $c_0$ onto a sublattice of $X$. This leads to a contradiction as the standard unit vector basis is not null in $c_0$, but the copy in $X$ is by \cite[Theorem 4.2]{me}.
\end{proof}

\Cref{unbounded} is another way to prove that a minimal topology has MCP iff it is Levi. We next present the sequential analogue:
\begin{lemma}\label{unbounded sigma}
Let $(X,\tau)$ be a Hausdorff locally solid vector lattice. If $\tau$ is unbounded then TFAE: 
\begin{enumerate}
\item $\tau$ has $\sigma$-MCP and is pre-Lebesgue;
\item $\tau$ is $\sigma$-Lebesgue and $\sigma$-Levi.
\end{enumerate}
\end{lemma}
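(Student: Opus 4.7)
The plan is to mirror the net-version argument from \Cref{unbounded}. The implication (ii)$\Rightarrow$(i) is routine: $\sigma$-Lebesgue trivially implies pre-Lebesgue, while $\sigma$-Lebesgue together with $\sigma$-Levi yields $\sigma$-MCP because any increasing $\tau$-Cauchy sequence in $X_+$ is automatically $\tau$-bounded, hence has a supremum by $\sigma$-Levi, to which $\sigma$-Lebesgue then supplies $\tau$-convergence.

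For the substantive direction (i)$\Rightarrow$(ii), I would invoke the sequential analogue of \cite[Theorem 2.5]{c0} to reduce the problem to verifying that $(X,\tau)$ contains no lattice copy of $c_0$. This reduction step is identical to the one used in the net case: supposing for contradiction that $c_0$ embeds as a sublattice of $X$ via a homeomorphic Riesz isomorphism, the image of the standard unit vector basis $(e_n)$ of $c_0$ would have to be $\tau$-null by \cite[Theorem 4.2]{me} applied in the unbounded setting, yet $(e_n)$ is not null in $c_0$ itself, a contradiction. Since the $c_0$-obstruction is intrinsically sequential (it concerns the canonical basis), no modification of the argument from \Cref{unbounded} should be required once the sequential reduction theorem is in hand.

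The main obstacle, if any, lies in confirming that \cite[Theorem 2.5]{c0} admits the desired sequential phrasing, i.e., that pre-Lebesgue plus $\sigma$-MCP together with the absence of a lattice copy of $c_0$ force $\sigma$-Lebesgue plus $\sigma$-Levi. Given that the usual proofs of such $c_0$-style dichotomies construct the offending lattice copy from an increasing bounded sequence that fails to be Cauchy (or a disjoint order bounded non-null sequence), the argument naturally produces a sequence rather than a net, so the sequential version should follow from the same proof without additional work.
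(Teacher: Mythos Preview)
Your approach to (i)$\Rightarrow$(ii) matches the paper's: exclude a lattice copy of $c_0$ using \cite[Theorem~4.2]{me}, with the sequential reduction you anticipated being supplied by \cite[Proposition~2.1 and Theorem~2.4]{c0}.

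There is, however, a genuine gap in your (ii)$\Rightarrow$(i). The assertion that ``$\sigma$-Lebesgue trivially implies pre-Lebesgue'' is false, even for unbounded topologies. Take $X=C[0,1]$ with the supremum norm: since $X$ has a strong unit the norm topology coincides with its own unbounding and is therefore unbounded; Dini's theorem gives the $\sigma$-Lebesgue property; yet a disjoint sequence of continuous bump functions of height one is order bounded by the constant function $1$ and is not norm-null, so pre-Lebesgue fails. The paper's argument uses \emph{both} hypotheses in (ii): given $0\le x_n\uparrow\le u$, order boundedness yields $\tau$-boundedness, $\sigma$-Levi then supplies $x_n\uparrow x$ for some $x\in X$, and finally $\sigma$-Lebesgue gives $x_n\xrightarrow{\tau}x$, so $(x_n)$ is $\tau$-Cauchy. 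Your derivation of $\sigma$-MCP is fine; it is only the pre-Lebesgue half that requires the $\sigma$-Levi assumption as well.
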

\begin{proof}
(i)$\Rightarrow$(ii) is similar to the last lemma; apply instead \cite[Proposition 2.1 and Theorem 2.4]{c0}.

(ii)$\Rightarrow$(i): It suffices to show that $\tau$ is pre-Lebesgue. For this, suppose that $0\leq x_n\uparrow \leq u$; we must show that $(x_n)$ is $\tau$-Cauchy. Since $\tau$ is $\sigma$-Levi and order bounded sets are $\tau$-bounded, $x_n\uparrow x$ for some $x\in X$. Since $\tau$ is $\sigma$-Lebesgue, $x_n\xrightarrow{\tau}x$. 
\end{proof}

Putting pieces together from other papers, we next characterize sequential completeness of $uo$-convergence. 
\begin{theorem}\label{uo sigma comp}
Let $X$ be a vector lattice. TFAE:
\begin{enumerate}
\item $X$ is sequentially uo-complete;
\item Every positive increasing uo-Cauchy sequence in $X$ uo-converges in $X$;
\item $X$ is universally $\sigma$-complete.
\end{enumerate}
In this case, uo-Cauchy sequences are order convergent.
\end{theorem}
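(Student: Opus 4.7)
The plan is to prove the cycle (i) $\Rightarrow$ (ii) $\Rightarrow$ (iii) $\Rightarrow$ (i), with the ``in this case'' sentence absorbed into the proof of the last implication. The implication (i) $\Rightarrow$ (ii) is immediate from the definition of sequential $uo$-completeness.

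For (ii) $\Rightarrow$ (iii), the strategy is to lift existence problems into $X^u$, where suprema automatically exist, and pull answers back via (ii). Since $X$ sits as a regular sublattice of $X^u$, \cite[Theorem 3.2]{GTX} tells us that a sequence from $X$ is $uo$-Cauchy in $X$ iff it is $uo$-Cauchy in $X^u$, and that $uo$-limits match when they lie in $X$. To verify $\sigma$-Dedekind completeness: take $0 \leq x_n \uparrow \leq e$ in $X$; the supremum $x = \sup_n x_n$ exists in the universally complete $X^u$, so $x_n \xrightarrow{uo} x$ in $X^u$, whence $(x_n)$ is $uo$-Cauchy in $X$. Hypothesis (ii) produces a $uo$-limit $x' \in X$, and uniqueness of $uo$-limits in $X^u$ forces $x' = x$; thus $x \in X$ and, being a supremum of $(x_n)$ in the overlattice, is the supremum in $X$. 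The same template applied to the partial-supremum sequence $(\bigvee_{k \leq n} y_k)$ of any disjoint $(y_n) \subseteq X_+$ yields $\sigma$-lateral completeness.

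For (iii) $\Rightarrow$ (i), let $(x_n)$ be $uo$-Cauchy in $X$; by \cite[Theorem 3.2]{GTX} it is $uo$-Cauchy in $X^u$, and \cite[Theorem 3.10]{GTX} then delivers an order limit $x \in X^u$. The remaining task is to locate $x$ inside $X$. Reducing to $x_n \geq 0$ via positive and negative parts, I would use $\sigma$-lateral completeness of $X$ to manufacture a weak unit $e \in X_+$ of the band generated by $(x_n)$, typically by choosing scalars $\lambda_n > 0$ so that $\sum_n \lambda_n x_n$ order-converges inside $X$. Then $(x_n \wedge ke)$ is order Cauchy in the $\sigma$-Dedekind complete ideal $I_e$ and hence order converges to some $z_k \in X$; in $X^u$ one has $z_k = x \wedge ke$ and $z_k \uparrow x$. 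A last application of universal $\sigma$-completeness then certifies $\sup_k z_k = x \in X$. This delivers (i) and, because order convergence is what we actually produce, also the closing ``in this case'' assertion.

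The main obstacle is this final step of (iii) $\Rightarrow$ (i): one must combine $\sigma$-Dedekind and $\sigma$-lateral completeness both to build the weak unit $e$ (via an order-summable rescaling of $(x_n)$) and, at the end, to ensure that the $X^u$-supremum $\sup_k z_k$ actually lands in $X$ rather than merely in $X^u$.
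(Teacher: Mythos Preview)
Your cycle (i)$\Rightarrow$(ii)$\Rightarrow$(iii) is correct and matches the paper: the argument you spell out for (ii)$\Rightarrow$(iii) is exactly what ``careful inspection of \cite[Proposition~2.8]{LC}'' amounts to, namely pushing the increasing (resp.\ laterally increasing) sequence into $X^u$, reading off $uo$-Cauchyness there, and pulling the supremum back via (ii) and regularity.

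The divergence is in (iii)$\Rightarrow$(i). The paper does \emph{not} pass through $X^u$: it applies \cite[Theorem~3.10]{GTX} directly to the universally $\sigma$-complete lattice $X$, which already yields that every $uo$-Cauchy sequence in $X$ is order convergent in $X$ (and hence the ``moreover'' clause for free). Your route---apply \cite[Theorem~3.10]{GTX} to $X^u$, then drag the limit back into $X$---is a genuine detour, and the step you flag as ``the main obstacle'' is indeed a gap as written. You have $z_k\uparrow x$ in $X^u$ with $z_k\in X$, but $\sigma$-Dedekind completeness only handles \emph{order bounded} increasing sequences, and the increments $z_{k+1}-z_k$ are not disjoint, so $\sigma$-lateral completeness does not apply directly either; the bare phrase ``a last application of universal $\sigma$-completeness'' does not constitute a proof.

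The detour can be salvaged, but it requires an extra structural fact you do not invoke: once you have produced a weak unit $e$ for the band $B_e$ in $X$ generated by the $x_n$ (and your disjointification idea for building $e$ does work, via band projections available from $\sigma$-order completeness), observe that $B_e$ is universally $\sigma$-complete \emph{with a weak unit}, hence universally complete (see \cite[Chapter~7]{AB03}). Then $B_e$ coincides with the band it generates in $X^u$, which contains $x$, so $x\in B_e\subseteq X$. Without this step your argument is incomplete; with it, it becomes a longer proof of what \cite[Theorem~3.10]{GTX} already delivers in one citation.
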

\begin{proof}
(i)$\Rightarrow$(ii) is clear. (ii)$\Rightarrow$(iii) by careful inspection of \cite[Proposition 2.8]{LC}, (iii)$\Rightarrow$(i) and the moreover clause follow from \cite[Theorem 3.10]{GTX}.
\end{proof}
\begin{remark} Recall that by \cite[Theorem 7.49]{AB03}, every locally solid topology on a universally $\sigma$-complete vector lattice satisfies the pre-Lebesgue property. Using $uo$-convergence, we give a quick proof of this. Suppose $\tau$ is a locally solid topology on a universally $\sigma$-complete vector lattice $X$; we claim that $uo$-null sequences are $\tau$-null. This follows since $\tau$ is $\sigma$-Lebesgue and $uo$ and $o$-convergence agree for sequences by \cite[Theorem 3.9]{GTX}. In particular, since disjoint sequences are $uo$-null, disjoint sequences are $\tau$-null.
\end{remark}
We next give the topological analogue of \Cref{uo sigma comp}:
\begin{lemma}
Let $X$ be a vector lattice admitting a minimal topology $\tau$. TFAE:
\begin{enumerate}
\item $\tau$ is $\sigma$-Levi;
\item $\tau$ has $\sigma$-MCP;
\item $X$ is universally $\sigma$-complete.
\item $(X,\tau)$ is sequentially boundedly $uo$-complete in the sense that $\tau$-bounded $uo$-Cauchy sequences in $X$ are $uo$-convergent in $X$.
\end{enumerate}
\end{lemma}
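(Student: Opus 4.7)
The plan is to prove the cycle (i) $\Rightarrow$ (ii) $\Rightarrow$ (iii) $\Rightarrow$ (iv) $\Rightarrow$ (i). Since $\tau$ is minimal, \Cref{a.e. implies measure} gives that $\tau$ is $\sigma$-Lebesgue and unbounded, so (i) $\Rightarrow$ (ii) is an immediate application of \Cref{unbounded sigma}. Once (iii) is in hand, \Cref{uo sigma comp} says $X$ is sequentially $uo$-complete, whence every $uo$-Cauchy sequence --- and in particular every $\tau$-bounded $uo$-Cauchy sequence --- $uo$-converges in $X$, yielding (iii) $\Rightarrow$ (iv).

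For (iv) $\Rightarrow$ (i), I would mimic the argument of \Cref{B}: a $\tau$-bounded increasing sequence $0 \le x_n \uparrow$ is dominable by \cite[Theorem~7.50]{AB03}, hence order bounded in $X^u$ by \cite[Theorem~7.37]{AB03}. Thus $x_n \uparrow \hat x$ in $X^u$ and, in particular, $(x_n)$ is $uo$-Cauchy in $X^u$ and therefore in $X$. By (iv) there is $x \in X$ with $x_n \xrightarrow{uo} x$, and since $(x_n)$ is increasing and positivity passes to $uo$-limits, $x$ is the supremum of $(x_n)$ in $X$; this is the $\sigma$-Levi property.

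The substantive step is (ii) $\Rightarrow$ (iii). By the characterization in \Cref{uo sigma comp}, it suffices to show that every positive increasing $uo$-Cauchy sequence $(x_n)$ in $X$ $uo$-converges in $X$. Such a sequence is $uo$-Cauchy in the universally complete extension $X^u$ (since $X$ sits regularly inside $X^u$), and applying \Cref{uo sigma comp} to $X^u$ itself gives $x_n \uparrow \hat x$ for some $\hat x \in X^u$. To force $\hat x$ into $X$, I would invoke \cite[Corollary~5.7]{me}, by which $\tau$ extends to a complete Hausdorff Lebesgue topology $\tau^u$ on $X^u$; the Lebesgue property of $\tau^u$ then gives $x_n \xrightarrow{\tau^u} \hat x$. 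In particular $(x_n)$ is $\tau$-Cauchy in $X$, so by $\sigma$-MCP it $\tau$-converges to some $y \in X$. Hausdorffness of $\tau^u$ forces $y = \hat x$, so $\hat x \in X$ and $x_n \uparrow \hat x$ in $X$, which is the $uo$-convergence we needed.

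The main obstacle is precisely this last step: upgrading $\sigma$-MCP to universal $\sigma$-completeness requires pulling a supremum computed in $X^u$ back into $X$, and the bridge is the topology $\tau^u$, which converts the purely order-theoretic $uo$-Cauchy condition into a $\tau$-Cauchy condition in $X$ that $\sigma$-MCP can actually consume.
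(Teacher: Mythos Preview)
Your proof is correct. The only real departure from the paper is in (ii)$\Rightarrow$(iii): the paper verifies universal $\sigma$-completeness from the definition, checking lateral $\sigma$-completeness by taking a disjoint positive sequence $(a_n)$, observing that the partial sums $x_n=\sum_{k\le n}a_k$ form a $uo$-Cauchy sequence, and then applying \Cref{a.e. implies measure} and $\sigma$-MCP directly. You instead invoke the characterization (ii) of \Cref{uo sigma comp} and handle an arbitrary positive increasing $uo$-Cauchy sequence; this is arguably cleaner conceptually, since it avoids splitting into the $\sigma$-order complete and laterally $\sigma$-complete pieces. Note, however, that your detour through $X^u$ and $\tau^u$ is unnecessary: since $\tau$ is minimal, \Cref{a.e. implies measure} already gives that any $uo$-Cauchy sequence in $X$ is $\tau$-Cauchy, so $\sigma$-MCP yields a $\tau$-limit $y\in X$, and $x_n\uparrow y$ follows from Hausdorffness of $\tau$---no passage to the universal completion is required. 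For (iv)$\Rightarrow$(i), the paper simply says ``easy''; your expanded argument via \cite[Theorems~7.50 and~7.37]{AB03} is exactly the intended one.
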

\begin{proof}
(i)$\Leftrightarrow$(ii) follows from \Cref{unbounded sigma}. We next deduce (iii). Since $\tau$ is $\sigma$-Levi, $X$ is $\sigma$-order complete; we prove $X$ is laterally $\sigma$-complete. Let $\{a_n\}$ be a countable collection of mutually disjoint positive vectors in $X$, and define $x_n=\sum_{k=1}^n a_k$. Then $(x_n)$ is a positive increasing sequence in $X$, and it is $uo$-Cauchy, as an argument similar to \cite[Proposition 2.8]{LC} easily shows. By \Cref{a.e. implies measure}, $(x_n)$ is $\tau$-Cauchy, hence $x_n\xrightarrow{\tau}x$ for some $x\in X$ since $\tau$ has $\sigma$-MCP. Since $(x_n)$ is increasing and $\tau$ is Hausdorff, $x_n\uparrow x$. Clearly, $x=\sup\{a_n\}$.

%let $D=A\cup (B_A^d)_+$ where $B_A$ is the band generated by $A$. Then $\overline{I(D)}^{\tau}=B_D=X$ since $\tau$ is Lebesgue. If $0\leq d \in B_A^d$, we have $|x_n-x_m|\wedge d=0$ so that $|x_n-x_m|\wedge d\xrightarrow{\tau}0$ in $X$. For $a_k\in A$, $|x_n-x_m|\wedge a_k=0$ for $n,m>k$. Therefore, $|x_n-x_m|\wedge a_k \xrightarrow{\tau}0$. We have thus shown that $|x_n-x_m|\wedge d \xrightarrow{\tau}0$ for all $d\in D$. Therefore $x_n-x_m\xrightarrow{u_{I(D)}\tau}0$. By [KT] Corollary 3.2, $x_n-x_m\xrightarrow{u\tau}0$. Since $\tau$ is minimal, $u\tau=\tau$, so that $(x_n)$ is $\tau$-Cauchy. By assumption $x_n\xrightarrow{\tau}x$ for some $x\in X$. By monotonicity and Hausdorff, $x_n\uparrow x$. Clearly, $x=\sup A$.

%(iii)$\Rightarrow$(i): Suppose $X$ is universally $\sigma$-complete and $\tau$ is minimal. We prove that $\tau$ is $\sigma$-Levi. Suppose $0\leq x_n\uparrow$ is $\tau$-bounded in $X$. By \cite[Theorem 7.50]{AB03}, $(x_n)$ is dominable. By \cite[Theorem 7.38]{AB03}, $\sup x_n$ exists. This proves $\tau$ is $\sigma$-Levi and hence has $\sigma$-MCP.

(iii)$\Rightarrow$(iv) follows from \Cref{uo sigma comp}; (iv)$\Rightarrow$(i) is easy.
\end{proof}
The following question(s) remain open:
\begin{question} \label{Whole space}
Let $X$ be a vector lattice admitting a minimal topology $\tau$. Are the following equivalent?
\begin{enumerate}
\item $(X,\tau)$ is sequentially complete;
\item $X$ is universally $\sigma$-complete.
\end{enumerate}
\end{question}
\begin{question} \label{Intervals}
Let $(X,\tau)$ be Hausdorff and Lebesgue. Are the following equivalent?
\begin{enumerate}
\item Order intervals of $X$ are sequentially $\tau$-complete;
\item $X$ is $\sigma$-order complete.
\end{enumerate}
\end{question}
\begin{remark}
\Cref{Whole space} and \Cref{Intervals} are equivalent. Indeed, in both cases it is known that (i)$\Rightarrow$(ii). If \Cref{Intervals} is true then \Cref{Whole space} is true since we have already established that minimal topologies have $\sigma$-MCP when $X$ is universally $\sigma$-complete. Suppose \Cref{Whole space} is true. If $X$ is $\sigma$-order complete, then $X$ is an ideal in its universal $\sigma$-completion, $X^s$. Indeed, it is easy to establish that if $Y$ is a $\sigma$-order complete vector lattice sitting as a super order dense sublattice of a vector lattice $Z$, then $Y$ is an ideal of $Z$; simply  modify the arguments in \cite[Theorem 1.40]{AB03}. By \cite[Theorem 4.22]{AB03} we may assume, WLOG, that $\tau$ is minimal. $\tau$ then lifts to the universal completion and can be restricted to $X^s$. 
\end{remark}
\Cref{Intervals} is a special case of  Aliprantis and Burkinshaw's \cite[Open Problem 4.2]{AB78}, which we state as well:
\begin{question}\label{Open A&B}
Suppose $\tau$ is a Hausdorff $\sigma$-Fatou topology on a $\sigma$-order complete vector lattice $X$. Are the order intervals of $X$ sequentially $\tau$-complete?
\end{question}
The case of complete order intervals is much easier than the sequentially complete case. The next result is undoubtedly known, but fits in nicely; we provide a simple proof that utilizes minimal topologies.
\begin{proposition}
Suppose $\tau$ is a Hausdorff Lebesgue topology on $X$. Order intervals of $X$ are complete iff $X$ is order complete.
\end{proposition}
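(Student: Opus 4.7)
The plan is to route everything through the minimal topology $u\tau$ on $X$, exploiting the fact that $\tau$ and $u\tau$ coincide on order intervals. I first need to confirm that $u\tau$ is itself minimal. By idempotence of the operation $\sigma\mapsto u\sigma$ it is unbounded, it is Lebesgue because $\tau$ is, and it is Hausdorff: if $|x|\wedge v$ lies in every $u\tau$-neighbourhood of $0$ for every $v\in X_+$, then Hausdorffness of $\tau$ forces $|x|\wedge v=0$ for each $v$, and $v=|x|$ yields $x=0$. Thus \Cref{a.e. implies measure} gives that $u\tau$ is minimal.

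For the direction $(\Leftarrow)$, suppose $X$ is order complete. I invoke \cite[Corollary 5.7]{me} to extend $u\tau$ to a complete Hausdorff Lebesgue topology $\widehat{u\tau}$ on the universal completion $X^u$. A standard argument (an order complete order-dense sublattice is an ideal, cf.\ \cite[Theorem 1.40]{AB03}) shows that $X$ is an ideal of $X^u$, so for any $u\in X_+$ we have $[0,u]_X=[0,u]_{X^u}$. Order intervals are closed in any Hausdorff locally solid topology (closedness of the positive cone), so $[0,u]_{X^u}$ is a $\widehat{u\tau}$-closed subset of the complete space $(X^u,\widehat{u\tau})$, hence $\widehat{u\tau}$-complete. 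Since $\widehat{u\tau}|_X=u\tau$ and $\tau=u\tau$ on $[0,u]$, I conclude that $[0,u]$ is $\tau$-complete.

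For $(\Rightarrow)$, assume order intervals of $X$ are $\tau$-complete, and let $0\leq x_\alpha\uparrow\leq u$. Being order bounded, the net is $u\tau$-bounded, so \Cref{Cauchy bounded} (applied to the minimal topology $u\tau$) yields that $(x_\alpha)$ is $u\tau$-Cauchy. Since $\tau$ and $u\tau$ coincide on $[0,u]$, the net is $\tau$-Cauchy in $[0,u]$, and by hypothesis it $\tau$-converges to some $x\in[0,u]$. Closedness of the positive cone in a Hausdorff locally solid topology gives $x_\alpha\uparrow x$, so $x=\sup_\alpha x_\alpha$. Every order bounded increasing net in $X_+$ therefore has a supremum, so $X$ is order complete.

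The main conceptual hurdle lies in $(\Leftarrow)$: I must identify $[0,u]_X$ as a closed subset of $(X^u,\widehat{u\tau})$, which requires both the ideal property of $X$ in $X^u$ and the extension result \cite[Corollary 5.7]{me}; once these are in hand, the argument is essentially formal. The direction $(\Rightarrow)$ is a direct application of \Cref{Cauchy bounded}.
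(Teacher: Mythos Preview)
Your proof is correct, but both directions take a different route from the paper's.

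For $(\Leftarrow)$, the paper simply invokes \cite[Theorem~4.28]{AB03}, which directly gives that order intervals of an order complete vector lattice are complete under any Hausdorff Lebesgue topology. Your argument---extending $u\tau$ to a complete topology on $X^u$ via \cite[Corollary~5.7]{me}, identifying $[0,u]_X=[0,u]_{X^u}$ through the ideal property, and using closedness of order intervals---is a self-contained re-derivation of that result in this setting.

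For $(\Rightarrow)$, the paper also reduces to a minimal topology (via \cite[Theorem~4.22]{AB03}, in the same spirit as your passage to $u\tau$), but then argues structurally: by \cite[Theorem~2.42]{AB03} and \cite[Theorem~5.2]{me}, completeness of order intervals forces $X$ to be an ideal of its topological completion $\widehat X=X^u$, and order completeness is inherited from $X^u$. Your route is more direct: the $B$-property (\Cref{B}, equivalently the forward direction of \Cref{Cauchy bounded}) makes any order bounded increasing net $u\tau$-Cauchy, hence $\tau$-Cauchy on $[0,u]$, hence $\tau$-convergent by hypothesis, and the limit is the desired supremum. This sidesteps the topological-completion machinery entirely and is arguably more elementary for this direction, while the paper's argument yields the stronger structural conclusion that $X$ is an ideal of $X^u$ as a by-product.
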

\begin{proof}
If $X$ is order complete then order intervals are complete by \cite[Theorem 4.28]{AB03}.

By \cite[Theorem 4.22]{AB03} we may assume, WLOG, that $\tau$ is minimal. If order intervals are complete then $X$ is an ideal of $\widehat{X}=X^u$ by \cite[Theorem 2.42]{AB03} and \cite[Theorem 5.2]{me}. Since $X^u$ is order complete, so is $X$. 
\end{proof}
\begin{remark} If $X$ is an order complete and laterally $\sigma$-complete vector lattice admitting a minimal topology $\tau$, then $\tau$ is sequentially complete. Although these conditions are strong, they do not force $X$ to be universally complete. This can be seen by equipping the vector lattice of \cite[Example 7.41]{AB03} with the minimal topology given by restriction of pointwise convergence from the universal completion.
\end{remark}

The key step in the proof of \Cref{sequ boundedly uo} is \cite[Theorem 2.4]{AW97} which states that a Banach lattice is $\sigma$-Levi if and only if it is \emph{laterally} $\sigma$-Levi. A sequence $(x_n)$ in a vector lattice  is said to be \term{laterally increasing} if it is increasing and $(x_m-x_n)\wedge x_n=0$ for all $m\geq n$. We say that a locally solid vector lattice $(X,\tau)$ has the \term{lateral $\sigma$-Levi property} if $\sup x_n$ exists whenever $(x_n)$ is laterally increasing and $\tau$-bounded. For minimal topologies, the $\sigma$-Levi and lateral $\sigma$-Levi properties do not agree, as we now show:
%In general, if $(X,\tau)$ is a locally solid vector lattice then, following \cite{Lab85}, we say that a net $(x_{\alpha})\subseteq X_+$ is \term{laterally increasing} if it is increasing and $(x_{\beta}-x_{\alpha})\wedge x_{\alpha}=0$ for all $\beta\geq \alpha$. $(X,\tau)$ is said to have the \term{lateral Levi property} if $\sup x_{\alpha}$ exists whenever $(x_{\alpha})$ is laterally increasing and $\tau$-bounded. If nets are replaced with sequences, we say that $(X,\tau)$ has the \term{lateral $\sigma$-Levi property}. We next characterize these properties for minimal topologies.
\begin{proposition}
Let $X$ be a vector lattice admitting a minimal topology $\tau$. TFAE:
\begin{enumerate}
\item $X$ is laterally $\sigma$-complete;
\item $\tau$ has the lateral $\sigma$-Levi property;
\item Every disjoint positive sequence, for which the set of all possible finite sums is $\tau$-bounded, must have a supremum.
\end{enumerate}
\end{proposition}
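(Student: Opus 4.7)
The plan is to prove the cycle (i)$\Rightarrow$(ii)$\Rightarrow$(iii)$\Rightarrow$(i). The first two implications are quite direct; the content is in (iii)$\Rightarrow$(i), where one must manufacture $\tau$-boundedness of finite sums from nothing, and this is where the minimal topology hypothesis will be used via the universal completion.

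For (i)$\Rightarrow$(ii), I would observe that if $(x_n)$ is a laterally increasing sequence in $X_+$, then setting $a_1=x_1$ and $a_{n+1}=x_{n+1}-x_n$ produces a disjoint positive sequence whose partial sums are exactly the $x_n$. Hence $\sup_n x_n$ exists as soon as the disjoint sequence $(a_n)$ has a supremum, which is guaranteed by lateral $\sigma$-completeness (in particular no $\tau$-boundedness hypothesis is needed).

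For (ii)$\Rightarrow$(iii), given a disjoint positive $(a_n)$ whose finite sums are $\tau$-bounded, the partial sums $x_n=\sum_{k=1}^n a_k$ are laterally increasing and form a $\tau$-bounded sequence, so they have a supremum by hypothesis; this supremum is easily seen to coincide with $\sup_n a_n$.

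The main step, (iii)$\Rightarrow$(i), is the obstacle: given an arbitrary disjoint positive sequence $(a_n)$ in $X$, I must show the finite partial sums $x_n=\sum_{k=1}^n a_k$ are $\tau$-bounded before I can invoke (iii). The plan is to pass to the universal completion $X^u$: since $X^u$ is universally complete, $\sup_n a_n$ exists there, and by disjointness the partial sums $x_n$ are dominated in $X^u$ by this supremum, so $(x_n)$ is order bounded in $X^u$. Then \cite[Theorem 7.37]{AB03} gives that $(x_n)$ is dominable in $X$, and \cite[Theorem 5.2]{me} (which applies since $\tau$ is minimal, in particular Lebesgue and unbounded) upgrades dominability to $\tau$-boundedness. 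Now (iii) applies and yields $\sup_n a_n \in X$. Since $(a_n)$ was an arbitrary disjoint positive sequence, $X$ is laterally $\sigma$-complete, closing the cycle.
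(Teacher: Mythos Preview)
Your proof is correct and follows essentially the same route as the paper: the paper declares (i)$\Rightarrow$(iii) and (ii)$\Leftrightarrow$(iii) clear and then proves (ii)$\Rightarrow$(i) via exactly your universal-completion/dominability/$\tau$-boundedness argument, whereas you run the cycle (i)$\Rightarrow$(ii)$\Rightarrow$(iii)$\Rightarrow$(i) and place that same argument at (iii)$\Rightarrow$(i). One small wording fix: condition (iii) requires the set of \emph{all} finite sums to be $\tau$-bounded, not just the initial partial sums $x_n$, but since every finite sum is dominated by some $x_n$ and solid hulls of $\tau$-bounded sets are $\tau$-bounded, this follows immediately from what you wrote.
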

\begin{proof}
(i)$\Rightarrow$(iii) is clear, as is (ii)$\Leftrightarrow$(iii); we prove (ii)$\Rightarrow$(i). Assume (ii) and let $(x_n)$ be a disjoint sequence in $X_+$. Since $(x_n)$ is disjoint, $(x_n)$ has a supremum in $X^u$. Define $y_n=x_1\vee \cdots \vee x_n$. The sequence $(y_n)$ is laterally increasing and order bounded in $X^u$. By \cite[Theorem 7.37]{AB03}, $(y_n)$ forms a dominable set in $X_+$. By \cite[Theorem 5.2(iv)]{me}, $(y_n)$ is $\tau$-bounded, and hence has supremum in $X$ by assumption. This implies that $(x_n)$ has a supremum in $X$ and, therefore, $X$ is laterally $\sigma$-complete.
\end{proof}
In \cite{Lab84} and \cite{Lab85}, many completeness-type properties of locally solid topologies were introduced. For entirety, we classify the remaining properties, which he refers to as ``BOB" and ``POB".
\begin{definition}
A Hausdorff locally solid vector lattice $(X,\tau)$ is said to be \textbf{boundedly order-bounded (BOB)} if increasing $\tau$-bounded nets in $X_+$ are order bounded in $X$. $(X,\tau)$ satisfies the \textbf{pseudo-order boundedness property (POB)} if increasing $\tau$-Cauchy nets in $X_+$ are order bounded in $X$.
\end{definition}
\begin{remark}
It is clear that a Hausdorff locally solid vector lattice is Levi iff it is order complete and boundedly order-bounded.  It is also clear that BOB and POB coincide for minimal topologies.
\end{remark}
\begin{proposition}\label{BOB}
Let $X$ be a vector lattice admitting a minimal topology $\tau$. TFAE:
\begin{enumerate}
\item $(X,\tau)$ satisfies BOB;
\item $X$ is majorizing in $X^u$.
\end{enumerate}
\end{proposition}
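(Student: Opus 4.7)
The plan is to prove each implication using the standard identification, valid for a minimal topology, between $\tau$-boundedness in $X$, dominability in $X$, and order boundedness in the universal completion $X^u$; this equivalence comes from \cite[Theorem 5.2]{me} together with \cite[Theorem 7.37]{AB03}, and is already applied several times earlier in the paper.

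For (ii)$\Rightarrow$(i), assume $X$ is majorizing in $X^u$ and let $(x_\alpha)$ be an increasing $\tau$-bounded net in $X_+$. Then $(x_\alpha)$ is dominable in $X$ by \cite[Theorem 5.2]{me}, hence order bounded in $X^u$ by \cite[Theorem 7.37]{AB03}; pick $y\in X^u_+$ with $x_\alpha\leq y$ for all $\alpha$. Since $X$ is majorizing in $X^u$, there exists $x\in X$ with $y\leq x$, and then $x_\alpha\leq x$ for all $\alpha$, proving BOB.

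For (i)$\Rightarrow$(ii), fix $y\in X^u_+$. Since $X$ is order dense in $X^u$, the set $D:=\{z\in X_+ : z\leq y\}$ is upward directed and $\sup D=y$ computed in $X^u$. Viewed as an increasing net in $X_+$, $D$ is order bounded in $X^u$ by $y$, hence dominable in $X$ by \cite[Theorem 7.37]{AB03}, and therefore $\tau$-bounded in $X$ by \cite[Theorem 5.2]{me}. The BOB hypothesis then yields some $x\in X$ with $z\leq x$ for every $z\in D$. Since $X$ sits as an order dense (equivalently, regular) sublattice of $X^u$, this upper bound in $X$ is also an upper bound in $X^u$, so $x\geq \sup D=y$, as required.

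The only subtle point is the last sentence: one must justify that an $X$-upper bound for $D$ dominates the $X^u$-supremum of $D$. This is immediate from the fact that $X$ is a regular sublattice of $X^u$, so existing suprema in $X$ agree with those in $X^u$; equivalently, inequalities in $X$ persist in $X^u$ because the inclusion is a Riesz homomorphism. I expect this to be the main place where care is needed, but it is standard.
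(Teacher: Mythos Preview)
Your proof is correct and follows essentially the same route as the paper: both directions rest on the equivalence, for minimal topologies, between $\tau$-boundedness, dominability, and order boundedness in $X^u$, and then use BOB or the majorizing hypothesis to close the argument. The only cosmetic differences are that the paper cites \cite[Theorem 7.50]{AB03} (rather than \cite[Theorem 5.2]{me}) for ``$\tau$-bounded $\Rightarrow$ dominable'' in (ii)$\Rightarrow$(i), and that your final ``subtle point'' is slightly overcautious: the inequality $x\geq y$ follows simply because $X$ is a sublattice of $X^u$ (so inequalities persist), without any appeal to regularity of suprema.
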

\begin{proof}
(i)$\Rightarrow$(ii): Let $0\leq u\in X^u$. Since $X$ is order dense in $X^u$, there exists a net $(x_{\alpha})$ in $X$ such that $0\leq x_{\alpha}\uparrow u$. In particular, $(x_{\alpha})$ is order bounded in $X^u$, hence dominable in $X$ by \cite[Theorem 7.37]{AB03}. By \cite[Theorem 5.2]{me}, $(x_{\alpha})$ is $\tau$-bounded. By assumption, $(x_{\alpha})$ is order bounded in $X$, hence, $(x_{\alpha})\subseteq [0,x]$ for some $x\in X_+$. It follows that $u\leq x$, so that $X$ majorizes $X^u$.

(ii)$\Rightarrow$(i): Suppose $(x_{\alpha})$ is an increasing $\tau$-bounded net in $X_+$. It follows from \cite[Theorem 7.50]{AB03} that $(x_{\alpha})$ is dominable, hence order bounded in $X^u$. Since $X$ majorizes $X^u$, $(x_{\alpha})$ is order bounded in $X$.
\end{proof}
\begin{remark}
By \cite[Theorem 7.15]{AB03}, laterally complete vector lattices majorize their universal completions.
\end{remark}
\begin{remark}
If $\tau$ is a Hausdorff Fatou topology on $X$, it is easy to see that $(X,\tau)$ satisfies BOB iff every increasing $\tau$-bounded net in $X_+$ is order Cauchy in $X$. Compare with \Cref{Hui Li}.
\end{remark}
We next state the $\sigma$-analogue of \Cref{BOB}.
\begin{proposition}
Let $X$ be an almost $\sigma$-order complete vector lattice admitting a minimal topology $\tau$. TFAE:
\begin{enumerate}
\item $(X,\tau)$ satisfies $\sigma$-BOB;
\item $X$ is majorizing in the universal $\sigma$-completion $X^s$ of $X$.
\end{enumerate}
\end{proposition}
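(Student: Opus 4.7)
The plan is to run the argument of the preceding (net) proposition almost verbatim, replacing nets by sequences throughout and the universal completion $X^u$ by the universal $\sigma$-completion $X^s$. The hypothesis that $X$ is almost $\sigma$-order complete is precisely what ensures that $X^s$ is well-defined and that $X$ sits as a super order dense sublattice of $X^s$; in particular, every $0\leq u\in X^s$ is the supremum (in $X^s$) of some increasing sequence drawn from $X_+$.

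For (i)$\Rightarrow$(ii), I would fix $0\leq u\in X^s$ and use super order density to obtain a sequence $(x_n)\subseteq X$ with $0\leq x_n\uparrow u$ in $X^s$. Then $(x_n)$ is order bounded in $X^s$, hence dominable in $X$ by the sequential version of \cite[Theorem 7.37]{AB03}, hence $\tau$-bounded by \cite[Theorem 5.2]{me}. The $\sigma$-BOB hypothesis supplies some $x\in X_+$ with $x_n\in[0,x]$ for every $n$; passing to the supremum in $X^s$ yields $u\leq x$, proving that $X$ majorizes $X^s$.

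For (ii)$\Rightarrow$(i), I would take an increasing $\tau$-bounded sequence $(x_n)\subseteq X_+$, invoke \cite[Theorem 7.50]{AB03} to conclude that $(x_n)$ is dominable in $X$, and then upgrade this to order boundedness in $X^s$ (again via the sequential analogue of \cite[Theorem 7.37]{AB03}). The assumption that $X$ majorizes $X^s$ then transports the order bound back inside $X$, which is exactly $\sigma$-BOB.

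The only real obstacle is bookkeeping: one must be comfortable that the statements of \cite[Theorems 7.37 and 7.50]{AB03} — phrased in \cite{AB03} for nets with $X^u$ as ambient space — apply to sequences with $X^s$ as ambient space. This is where the standing almost $\sigma$-order completeness of $X$ enters, so once the correct $\sigma$-avatars of those two results are on record the proof is mechanical. No additional ideas beyond those already used in \Cref{BOB} should be needed.
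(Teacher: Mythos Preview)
Your proposal is correct and follows the same route as the paper. The only difference is in the bookkeeping you yourself flag: for (ii)$\Rightarrow$(i) the paper does not invoke a single ``sequential analogue of \cite[Theorem 7.37]{AB03}'' but instead splits the step ``dominable in $X$ $\Rightarrow$ order bounded in $X^s$'' into two citations, first passing dominability from $X$ to $X^s$ via \cite[Lemma 7.11]{AB03} and then concluding order boundedness in $X^s$ via \cite[Theorem 7.38]{AB03}.
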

\begin{proof}
(i)$\Rightarrow$(ii) is similar to \Cref{BOB}.

(ii)$\Rightarrow$(i): Suppose $(x_n)$ is an increasing $\tau$-bounded sequence in $X_+$. It is then dominable in $X$, hence in $X^s$ by \cite[Lemma 7.11]{AB03}. It follows by \cite[Theorem 7.38]{AB03} that $(x_n)$ is order bounded in $X^s$. Since $X$ is majorizing in $X^s$, $(x_n)$ is order bounded in $X$.
\end{proof}
The next definition is standard in the theory of topological vector spaces:
\begin{definition}
Let $(E,\sigma)$ be a Hausdorff topological vector space. $E$ is \term{quasi-complete} if every $\sigma$-bounded $\sigma$-Cauchy net is $\sigma$-convergent.
\end{definition}
\begin{remark}Since Cauchy sequences are bounded, there is no sequential analogue of quasi-completeness.
\end{remark}

We finish with the full characterization of completeness of minimal topologies:
\begin{theorem}
Let $X$ be a vector lattice admitting a minimal topology $\tau$. TFAE:
\begin{enumerate}
\item $X$ is universally complete;
\item $\tau$ is complete;
\item $\tau$ satisfies MCP;
\item $\tau$ is Levi;
\item $\tau$ is quasi-complete;
\item $(X,\tau)$ is boundedly $uo$-complete in the sense that $\tau$-bounded $uo$-Cauchy nets in $X$ are $uo$-convergent in $X$.
\end{enumerate}
\end{theorem}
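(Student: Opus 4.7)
The plan is to prove (iv) $\Rightarrow$ (i) as the central step and then derive every other equivalence from the identification $X = X^u$, on which the minimal topology $\tau$ coincides with the complete Hausdorff Lebesgue topology $\tau^u$ furnished by \cite[Corollary 5.7]{me}. Note first that (iii) $\Leftrightarrow$ (iv) is already recorded in the remark following \Cref{Cauchy bounded}.

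For the key step (iv) $\Rightarrow$ (i), I would fix $0 \leq u \in X^u$. By order density of $X$ in $X^u$, there is a net $0 \leq x_\alpha \uparrow u$ in $X$. Since $(x_\alpha)$ is order bounded in $X^u$, it is dominable in $X$ by \cite[Theorem 7.37]{AB03} and therefore $\tau$-bounded by \cite[Theorem 5.2]{me}. The Levi property yields $x_\alpha \uparrow x$ in $X$, and comparing with $x_\alpha \uparrow u$ in $X^u$ forces $u = x \in X$. Hence $X = X^u$.

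The remaining equivalences are then bookkeeping. The implication (i) $\Rightarrow$ (ii) is immediate because $\tau = \tau^u$ is complete on $X = X^u$; (ii) $\Rightarrow$ (v) is trivial; and (v) $\Rightarrow$ (iii) follows from \Cref{Cauchy bounded}, since an increasing $\tau$-Cauchy net in $X_+$ is $\tau$-bounded and so $\tau$-convergent by quasi-completeness. For (vi) $\Rightarrow$ (iv), an increasing $\tau$-bounded net $(x_\alpha)$ in $X_+$ is $uo$-Cauchy by the argument in the proof of \Cref{B} (dominability plus order boundedness in $X^u$); by (vi) it $uo$-converges to some $x \in X$, and since it is increasing this gives $x_\alpha \uparrow x$. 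For (iv) $\Rightarrow$ (vi), invoke (iv) $\Rightarrow$ (i) to put $X = X^u$: a $\tau$-bounded $uo$-Cauchy net is dominable by \cite[Theorem 7.50]{AB03}, hence order bounded in $X^u = X$ by \cite[Theorem 7.37]{AB03}, and an order bounded $uo$-Cauchy net in the order complete lattice $X$ is order Cauchy, hence order convergent, hence $uo$-convergent.

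The main obstacle is the structural step (iv) $\Rightarrow$ (i), which converts the topological Levi hypothesis into the algebraic conclusion $X = X^u$; once this is secured, every other implication is a short invocation of results already developed in \cite{me} and \cite{AB03}.
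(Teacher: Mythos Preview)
Your proof is correct, but it is organized differently from the paper's. The paper never isolates (iv) $\Rightarrow$ (i) as the central step; instead it obtains (i) $\Leftrightarrow$ (ii) by citing \cite[Corollary~5.3 and Theorem~6.4]{me} as a black box, and closes the loop (iii) $\Rightarrow$ (ii) via \cite[Corollary~4.39]{AB03} (MCP plus Fatou implies topological completeness). For (vi) the paper goes from (ii): a $uo$-Cauchy net is $\tau$-Cauchy by minimality, hence $\tau$-convergent by completeness, and then \cite[Remark~2.26]{me} upgrades $\tau$-convergence to $uo$-convergence; no dominability or order-boundedness argument is needed. Your route, by contrast, proves (iv) $\Rightarrow$ (i) by hand using order density, dominability, and Levi, and then derives everything else from the identification $X=X^u$. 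This is more self-contained---it avoids importing \cite[Corollary~5.3]{me} and \cite[Corollary~4.39]{AB03}---at the cost of a slightly longer argument for (iv) $\Rightarrow$ (vi), where you have to pass through order boundedness in $X^u$ and order completeness rather than simply invoking topological completeness. Both organizations are sound; the paper's is terser because it defers more to prior results, while yours makes the structural reason ($X=X^u$) explicit.
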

\begin{proof}
(i)$\Leftrightarrow$(ii) by \cite[Corollary 5.3]{me} combined with \cite[Theorem 6.4]{me}. Clearly, (ii)$\Rightarrow$(iii)$\Leftrightarrow$(iv). (iii)$\Rightarrow$(ii) since if $\tau$ satisfies MCP then $\tau$ is topologically complete by \cite[Corollary 4.39]{AB03}. We have thus established that (i)$\Leftrightarrow$(ii)$\Leftrightarrow$(iii)$\Leftrightarrow$(iv). It is clear that (ii)$\Rightarrow$(v), and (v)$\Rightarrow$(iii) by \Cref{Cauchy bounded}.

(ii)$\Rightarrow$(vi): Let $(x_{\alpha})$ be a $uo$-Cauchy net in $X$; $(x_{\alpha})$ is then $\tau$-Cauchy and hence $\tau$-convergent. The claim then follows from \cite[Remark 2.26]{me}.

(vi)$\Rightarrow$(iv): Suppose $0\leq x_{\alpha}\uparrow$ is $\tau$-bounded. $(x_{\alpha})$ is then $uo$-Cauchy, hence $uo$-convergent to some $x\in X$. Clearly, $x=\sup x_{\alpha}$.
\end{proof}

\begin{remark}
This is in good agreement with \Cref{MCP}. If the minimal topology satisfies MCP then \Cref{MCP} states that every Hausdorff Lebesgue topology satisfies MCP. Universally complete spaces, however, admit at most one Hausdorff Lebesgue topology by \cite[Theorem 7.53]{AB03}.
\end{remark}

%Let $0\leq x_{\alpha}\uparrow$ and $\tau$ minimal. Then $(x_{\alpha})$ is $\tau$-bounded iff it is $\tau$-Cauchy.
%\begin{proof}
%If it is $\tau$-bounded, it is Cauchy by B-property. If it is Cauchy, then it converges in $X^u$, hence has sup in $X^u$, hence is dominable, hence is $\tau$-bounded.
%\end{proof}

\end{document}